\newtheorem{theorem}{Theorem}[section]
\newtheorem{definition}[theorem]{Definition}
\newtheorem{proposition}[theorem]{Proposition}
\begin{document}

\title{Weingarten integration over noncommutative homogeneous spaces}

\author{Teodor Banica}
\address{T.B.: Department of Mathematics, Cergy-Pontoise University, 95000 Cergy-Pontoise, France. {\tt teodor.banica@u-cergy.fr}}

\subjclass[2010]{46L51 (14A22, 60B15)}
\keywords{Noncommutative manifold, Weingarten integration}

\begin{abstract}
We discuss an extension of the Weingarten formula, to the case of noncommutative homogeneous spaces, under suitable ``easiness'' assumptions. The spaces that we consider are noncommutative algebraic manifolds, generalizing the spaces of type $X=G/H\subset\mathbb C^N$, with $H\subset G\subset U_N$ being subgroups of the unitary group, subject to certain uniformity conditions. We discuss various axiomatization issues, then we establish the Weingarten formula, and we derive some probabilistic consequences.
\end{abstract}

\maketitle

\section*{Introduction}

Given a compact group action $G\curvearrowright X$, assumed to be transitive, we have $X=G/H$, where $H=\{g\in G|gx_0=x_0\}$ is the stabilizer of a given point $x_0\in X$. Thus, we have an embedding $C(X)\subset C(G)$. The unique $G$-invariant integration on $X$ is then obtained as a composition, $\int:C(X)\subset C(G)\to\mathbb C$, and can be explicitely computed provided that we know how to integrate over $G$, for instance via a Weingarten type formula.

We discuss here some noncommutative extensions of these facts, based on some previous work in \cite{ba1}, \cite{ba2}, \cite{bgo}, \cite{bss}. The action $O_N^+\curvearrowright S^{N-1}_{\mathbb R,+}$, which is the free analogue of the usual action $O_N\curvearrowright S^{N-1}_\mathbb R$, was studied some time ago, in \cite{bgo}. Shortly afterwards, an extension to spaces of type $G_N/G_{N-M}$, with $M\leq N$, and with $G=(G_N)$ subject to some suitable uniformity assumptions (``easiness'') was discussed in \cite{bss}. More recently, various spaces of type $(G_M\times G_N)/(G_L\times G_{M-L}\times G_{N-L})$, with $L\leq M\leq N$, and with $G=(G_N)$ belonging to more general families of quantum groups, were studied in \cite{ba1}, \cite{ba2}.

The common feature of these spaces $X=G/H$ is that they are ``easy'', in the sense that one can explicitely integrate on them, via a Weingarten type formula. The purpose of the present paper is to provide an axiomatic framework for such spaces, to advance at the level of the general theory, and to enlarge the class of known examples. 

The paper is organized as follows: 1-2 are preliminary sections, in 3-4 we restrict the attention to the affine space case, in 5-6 we discuss some basic examples, and in 7-8 we focus on the easy space case and we discuss a number of probabilistic aspects.

\medskip

\noindent {\bf Acknowledgements.} I would like to thank the referee for a careful reading of the manuscript and for a number of useful suggestions.

\section{Homogeneous spaces}

We use Woronowicz's quantum group formalism in \cite{wo1}, \cite{wo2}, with the extra assumption $S^2=id$. In other words, the quantum groups that we will consider will be the abstract duals, in the sense of the general $C^*$-algebra theory, of the Hopf $C^*$-algebras considered in \cite{wo1}, \cite{wo2}, whose antipode satisfies the usual group-theoretic  condition $S^2=id$. 

The precise definition of these latter algebras is as follows:

\begin{definition}
A finitely generated Hopf $C^*$-algebra is a unital $C^*$-algebra $A$, given with a unitary matrix $u\in M_N(A)$ whose coefficients generate $A$, such that the formulae
$$\Delta(u_{ij})=\sum_ku_{ik}\otimes u_{kj}\quad,\quad
 \varepsilon(u_{ij})=\delta_{ij}\quad,\quad
S(u_{ij})=u_{ji}^*$$
define morphisms of $C^*$-algebras $\Delta:A\to A\otimes A$, $\varepsilon:A\to\mathbb C$, $S:A\to A^{opp}$.
\end{definition}

The morphisms $\Delta,\varepsilon,S$ are called comultiplication, counit and antipode. They satisfy the usual Hopf algebra axioms, on the dense $*$-subalgebra $<u_{ij}>\subset A$.

There are two basic classes of examples of such algebras, as follows:
\begin{enumerate}
\item The function algebra $A=C(G)$, with $G\subset U_N$ being a compact Lie group, together with the matrix of standard coordinates, $u_{ij}(g)=g_{ij}$. 

\item The group algebra $A=C^*(\Gamma)$, with $\Gamma=<g_1,\ldots,g_N>$ being a finitely generated discrete group, taken with the matrix $u=diag(g_1,\ldots,g_N)$. 
\end{enumerate}

In view of these examples, we write in general $A=C(G)=C^*(\Gamma)$, with $G$ being a compact quantum group, and $\Gamma$ being a discrete quantum group. See \cite{wo1}, \cite{wo2}.

A closed quantum subgroup of a compact quantum group, $H\subset G$, corresponds by definition to a morphism of $C^*$-algebras $\rho:C(G)\to C(H)$, mapping standard coordinates to standard coordinates. Observe that such a morphism is automatically surjective, and transforms the structural maps $\Delta,\varepsilon,S$ of the algebra $C(G)$ into those of $C(H)$.

Let us recall as well that given a noncommutative compact space $X$, an action $G\curvearrowright X$ corresponds by definition to a coaction map $\Phi:C(X)\to C(G)\otimes C(X)$, which is subject to the coassociativity condition $(id\otimes\Phi)\Phi=(\Delta\otimes id)\Phi$. See e.g. \cite{bss}.

Let us discuss now the quotient space construction:

\begin{proposition}
Given a quantum subgroup $H\subset G$, with associated quotient map $\rho:C(G)\to C(H)$, if we define the quotient space $X=G/H$ by setting
$$C(X)=\left\{f\in C(G)\Big|(id\otimes\rho)\Delta f=f\otimes1\right\}$$
then we have a coaction $\Phi:C(X)\to C(G)\otimes C(X)$, obtained as the restriction of the comultiplication of $C(G)$. In the classical case, we obtain the usual space $X=G/H$.
\end{proposition}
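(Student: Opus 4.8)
The plan is to verify the three claims in the proposition in turn: first that the defining formula for $C(X)$ really produces a $C^*$-subalgebra of $C(G)$ on which $\Delta$ restricts to a coaction $\Phi$, second that $\Phi$ satisfies the coassociativity condition, and third that in the classical case this recovers the usual homogeneous space $G/H$. The key technical object is the linear map $\Delta_\rho=(\mathrm{id}\otimes\rho)\Delta:C(G)\to C(G)\otimes C(H)$, which is an algebra morphism (being a composition of the morphisms $\Delta$ and $\mathrm{id}\otimes\rho$), so that $C(X)=\{f\mid\Delta_\rho f=f\otimes1\}$ is the equalizer of the two morphisms $\Delta_\rho$ and $f\mapsto f\otimes1$. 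An equalizer of two $*$-algebra morphisms is automatically a $*$-subalgebra, and it is norm-closed because both maps are continuous; this disposes of the well-definedness of $C(X)$ with essentially no computation.

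Next I would check that $\Delta$ maps $C(X)$ into $C(G)\otimes C(X)$, so that the restriction $\Phi$ makes sense as a map into $C(G)\otimes C(X)$ rather than merely $C(G)\otimes C(G)$. Take $f\in C(X)$; I must show $\Delta f\in C(G)\otimes C(X)$, i.e. that applying $\mathrm{id}\otimes\mathrm{id}\otimes\rho$ followed by $\mathrm{id}\otimes\Delta$ to $\Delta f$ leaves the last two legs in the form $(\,\cdot\,)\otimes1$. The clean way is to use coassociativity of $\Delta$ on $C(G)$ to rewrite $(\mathrm{id}\otimes\Delta_\rho)\Delta f=(\mathrm{id}\otimes\mathrm{id}\otimes\rho)(\mathrm{id}\otimes\Delta)\Delta f=(\mathrm{id}\otimes\mathrm{id}\otimes\rho)(\Delta\otimes\mathrm{id})\Delta f=(\Delta\otimes\mathrm{id})\Delta_\rho f=(\Delta\otimes\mathrm{id})(f\otimes1)=\Delta f\otimes1$, which is exactly the statement that $\Delta f$ lies in $C(G)\otimes C(X)$. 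The same chain of coassociativity identities, read on $\Phi=\Delta|_{C(X)}$, yields $(\mathrm{id}\otimes\Phi)\Phi=(\Delta\otimes\mathrm{id})\Phi$, so coassociativity of the coaction is immediate once the target space has been identified.

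For the classical case I would dualize. When $C(G)=C(G)$ and $C(H)=C(H)$ are genuine function algebras with $\rho$ the restriction dual to the inclusion $H\subset G$, the condition $(\mathrm{id}\otimes\rho)\Delta f=f\otimes1$ reads pointwise as $f(gh)=f(g)$ for all $g\in G$, $h\in H$, i.e. $f$ is constant on left cosets $gH$. Such functions are precisely the pullbacks of continuous functions on $G/H$ under the quotient map $G\to G/H$, so $C(X)\cong C(G/H)$, and $\Phi$ becomes the coaction dual to the usual translation action $G\curvearrowright G/H$. \textbf{The main obstacle} I anticipate is not any single hard estimate but the careful bookkeeping of tensor legs in the coassociativity computation of the second step: one must apply $\Delta$ and $\rho$ in the correct slots and invoke the coassociativity of $C(G)$ at exactly the right place, and it is easy to mislabel factors. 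Everything else — the subalgebra and closedness properties, and the classical identification — is routine once the equalizer viewpoint and the pointwise translation are set up.
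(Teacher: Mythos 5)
Your proposal is correct and follows essentially the same route as the paper: the equalizer observation for well-definedness, the coassociativity computation $(\mathrm{id}\otimes(\mathrm{id}\otimes\rho)\Delta)\Delta f=\Delta f\otimes 1$ showing $\Delta(C(X))\subset C(G)\otimes C(X)$, and the pointwise identification $f(gh)=f(g)$ in the classical case all match the paper's argument, with your $\Delta_\rho$ shorthand being only a cosmetic difference. Note that both your proof and the paper's leave implicit the same small point, namely that the condition $(\mathrm{id}\otimes\Delta_\rho)F=F\otimes 1$ characterizes membership of $F$ in $C(G)\otimes C(X)$ (a slice-map argument), so you are at exactly the paper's level of rigor.
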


\begin{proof}
Observe that $C(X)\subset C(G)$ is indeed a subalgebra, because it is defined via a relation of type $\varphi(f)=\psi(f)$, with $\varphi,\psi$ morphisms. Observe also that in the classical case we obtain the algebra of continuous functions on $X=G/H$, because:
\begin{eqnarray*}
(id\otimes\rho)\Delta f=f\otimes1
&\iff&(id\otimes\rho)\Delta f(g,h)=(f\otimes1)(g,h),\forall g\in G,\forall h\in H\\
&\iff&f(gh)=f(g),\forall g\in G,\forall h\in H\\
&\iff&f(gh)=f(gk),\forall g\in G,\forall h,k\in H
\end{eqnarray*}

Regarding now the construction of $\Phi$, observe that for $f\in C(X)$ we have: 
\begin{eqnarray*}
(id\otimes id\otimes\rho)(id\otimes\Delta)\Delta f
&=&(id\otimes id\otimes\rho)(\Delta\otimes id)\Delta f\\
&=&(\Delta\otimes id)(id\otimes\rho)\Delta f\\
&=&(\Delta\otimes id)(f\otimes 1)\\
&=&\Delta f\otimes1
\end{eqnarray*}

Thus $f\in C(X)$ implies $\Delta f\in C(G)\otimes C(X)$, and this gives the existence of $\Phi$. Finally, the fact that $\Phi$ is coassociative is clear from definitions, and so is the fact that, in the classical case, we obtain in this way the standard action $G\curvearrowright G/H$.
\end{proof}

As an illustration, in the group dual case we have:

\begin{proposition}
Assume that $G=\widehat{\Gamma}$ is a discrete group dual.
\begin{enumerate}
\item The quantum subgroups of $G$ are $H=\widehat{\Lambda}$, with $\Gamma\to\Lambda$ being a quotient group.

\item For such a quantum subgroup $\widehat{\Lambda}\subset\widehat{\Gamma}$, we have $\widehat{\Gamma}/\widehat{\Lambda}=\widehat{\Theta}$, where $\Theta=\ker(\Gamma\to\Lambda)$.
\end{enumerate}
\end{proposition}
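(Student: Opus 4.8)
The plan is to treat the two assertions separately, using in each case the explicit description of $C(G)=C^*(\Gamma)$ together with the fact, recorded above, that the quotient map $\rho$ is automatically surjective and intertwines the Hopf structure.

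For (1), I would start from the definition: a quantum subgroup $H\subset G$ is a morphism $\rho:C^*(\Gamma)\to C(H)$ sending the standard coordinates to the standard coordinates of $C(H)$. Since here the fundamental matrix is the diagonal matrix $u=\mathrm{diag}(g_1,\dots,g_N)$, the image matrix $v_{ij}=\rho(u_{ij})=\delta_{ij}\rho(g_i)$ is again diagonal, so the fundamental corepresentation of $C(H)$ is diagonal, with unitary entries $h_i:=\rho(g_i)$ generating $C(H)$. The key observation is then that the comultiplication axiom $\Delta(v_{ij})=\sum_k v_{ik}\otimes v_{kj}$ forces $\Delta(h_i)=h_i\otimes h_i$, i.e.\ each $h_i$ is group-like; hence $C(H)=C^*(\Lambda)$ with $\Lambda=\langle h_1,\dots,h_N\rangle$, so that $H=\widehat\Lambda$. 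Finally, $\rho$ being a surjection sending $g_i\mapsto h_i$ between group algebras is precisely the data of a surjective group homomorphism $\Gamma\to\Lambda$, that is, a quotient group; the converse direction is immediate, since any quotient $\Gamma\to\Lambda$ induces such a $\rho$.

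For (2), I would compute $C(X)$ directly from the defining formula $C(X)=\{f\mid(\mathrm{id}\otimes\rho)\Delta f=f\otimes1\}$ on the dense group algebra. Since $\Delta\gamma=\gamma\otimes\gamma$ for $\gamma\in\Gamma$, and $\rho$ sends $\gamma$ to its class $\bar\gamma\in\Lambda$, we have $(\mathrm{id}\otimes\rho)\Delta\gamma=\gamma\otimes\bar\gamma$, so the invariance condition reads $\gamma\otimes\bar\gamma=\gamma\otimes1$, i.e.\ $\bar\gamma=1$, i.e.\ $\gamma\in\Theta=\ker(\Gamma\to\Lambda)$. Writing a general element as $f=\sum_\gamma c_\gamma\gamma$ and using the linear independence of the group elements in the first leg of the tensor product, the condition decouples into $c_\gamma\bar\gamma=c_\gamma$ for each $\gamma$, forcing $f$ to be supported on $\Theta$; this identifies $C(X)=C^*(\Theta)$, that is $X=\widehat\Theta$.

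The routine parts are the linear-algebra manipulations; the one point requiring care is the passage between the dense $*$-subalgebra spanned by group elements and the full $C^*$-completion. In (1) this is handled by noting that group-likeness of the generators is detected on the $*$-subalgebra and persists to the completion, and in (2) by observing that the invariance condition is closed, so that checking it on the canonical basis of the dense subalgebra and then taking closures suffices. I expect this $C^*$-versus-algebraic bookkeeping, rather than any conceptual difficulty, to be the main thing to get right.
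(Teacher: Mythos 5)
Your proof is correct, and part (2) is essentially identical to the paper's own argument: the paper also writes $f=\sum_{g\in\Gamma}\lambda_g\cdot g$, computes $(id\otimes\rho)\Delta f=\sum_g\lambda_g\cdot g\otimes r(g)$, and reads off $supp(f)\subset\ker(r)$ from linear independence in the first leg. Where you genuinely diverge is part (1). The paper disposes of it in one line: $C^*(\Gamma)$ is cocommutative, hence so is any quotient, and then the structure theory of cocommutative Woronowicz algebras from \cite{wo1} is invoked as a black box to conclude that the quotient is again a group algebra. You instead argue directly from the data of a quantum subgroup: the fundamental matrix $u=diag(g_1,\ldots,g_N)$ has diagonal image $v=diag(h_1,\ldots,h_N)$, the corepresentation identity $\Delta(v_{ii})=\sum_k v_{ik}\otimes v_{ki}$ collapses to $\Delta(h_i)=h_i\otimes h_i$, and group-likeness propagates to all words in the $h_i^{\pm1}$, exhibiting $C(H)$ as a completion of the group algebra of $\Lambda=\langle h_1,\ldots,h_N\rangle$ and the map $g_i\mapsto h_i$ as the group quotient. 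This buys self-containedness and an explicit description of both $\Lambda$ and the quotient map, at the cost of a few standard facts you should still cite or verify (linear independence of distinct group-like elements, so that the dense $*$-subalgebra really is a group algebra), whereas the paper's route is shorter but leans entirely on imported structure theory. One caveat common to both treatments, which you correctly flag: the identification $C(H)=C^*(\Lambda)$ is only canonical up to the choice of $C^*$-completion (full versus reduced), an ambiguity the paper itself acknowledges later, in Proposition 5.3, by ``identifying full and reduced group algebras''; your closing remarks about the $C^*$-versus-algebraic bookkeeping are at the same level of rigor as the paper and do not constitute a gap relative to it.
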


\begin{proof}
The first assertion follows by using the theory from \cite{wo1}. Indeed, since the algebra $C(G)=C^*(\Gamma)$ is cocommutative, so are all its quotients, and this gives the result.

Regarding now (2), consider a quotient map $r:\Gamma\to\Lambda$, and denote by $\rho:C^*(\Gamma)\to C^*(\Lambda)$ its extension. With $f=\sum_{g\in\Gamma}\lambda_g\cdot g\in C^*(\Gamma)$ we have:
\begin{eqnarray*}
f\in C(\widehat{\Gamma}/\widehat{\Lambda})
&\iff&(id\otimes\rho)\Delta(f)=f\otimes 1\\
&\iff&\sum_{g\in\Gamma}\lambda_g\cdot g\otimes r(g)=\sum_{g\in\Gamma}\lambda_g\cdot g\otimes 1\\
&\iff&\lambda_g\cdot r(g)=\lambda_g\cdot 1,\forall g\in\Gamma\\
&\iff&supp(f)\subset\ker(r)
\end{eqnarray*}

But this means $\widehat{\Gamma}/\widehat{\Lambda}=\widehat{\Theta}$, with $\Theta=\ker(\Gamma\to\Lambda)$, as claimed.
\end{proof}

Given two noncommutative compact spaces $X,Y$, we say that $X$ is a quotient space of $Y$ when we have an embedding of $C^*$-algebras $\alpha:C(X)\subset C(Y)$. We have:

\begin{definition}
We call a quotient space $G\to X$ homogeneous when the comultiplication $\Delta:C(G)\to C(G)\otimes C(G)$ satisfies $\Delta(C(X))\subset C(G)\otimes C(X)$.
\end{definition}

In other words, an homogeneous quotient space $G\to X$ is a noncommutative space coming from a subalgebra $C(X)\subset C(G)$, which is stable under the comultiplication.

The relation with the quotient spaces from Proposition 1.2 is as follows:

\begin{theorem}
The following results hold:
\begin{enumerate}
\item The quotient spaces $X=G/H$ are homogeneous.

\item In the classical case, any homogeneous space is of type $G/H$.

\item In general, there are homogeneous spaces which are not of type $G/H$.
\end{enumerate}
\end{theorem}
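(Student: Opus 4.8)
The plan is to treat the three assertions in turn, the first two being direct, and the third requiring a concrete counterexample.

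For (1), I would simply observe that the required inclusion $\Delta(C(X))\subset C(G)\otimes C(X)$ is exactly what the computation in the proof of Proposition 1.2 establishes. Indeed, that computation shows precisely that $f\in C(X)=C(G/H)$ implies $\Delta f\in C(G)\otimes C(X)$, which is the homogeneity condition in Definition 1.4. So there is nothing further to prove here.

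For (2), I would work via Gelfand duality. In the classical case an embedding $C(X)\subset C(G)$ is dual to a surjection $p:G\to X$ of compact spaces, with $C(X)$ identified with the functions on $G$ that factor through $p$. Writing $\Delta f(g,h)=f(gh)$, the homogeneity condition says that for $f\in C(X)$ the function $(g,h)\mapsto f(gh)$ lies in $C(G)\otimes C(X)$, i.e. depends on $h$ only through $p(h)$. Equivalently, the equivalence relation $h\sim h'\Leftrightarrow p(h)=p(h')$ is left-invariant: $p(h)=p(h')$ forces $p(gh)=p(gh')$ for all $g$. I would then set $H=p^{-1}(p(e))$ and check, using left-invariance, that $H$ is a subgroup (closure under inverses and products both follow by translating the relation $h\sim e$), and that $p(g)=p(g')$ holds if and only if $g^{-1}g'\in H$. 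This identifies $X$ with $G/H$, and matching $C(X)$ with the algebra from Proposition 1.2 is then immediate, since both consist of the right-$H$-invariant functions on $G$.

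For (3), I would produce the counterexample in the group dual setting, using Proposition 1.3. For $G=\widehat{\Gamma}$ a general $\Delta$-stable subalgebra $C(X)\subset C^*(\Gamma)$ is forced to be of the form $C^*(\Sigma)$ for a subgroup $\Sigma\subset\Gamma$: slicing $\Delta f=\sum_g\lambda_g\,g\otimes g$ in the first leg by the functional $x\mapsto\tau(h^*x)$, with $\tau$ the canonical trace, shows that each $\lambda_h h$ lies in $C(X)$, so $C(X)$ is spanned by the group elements it contains, and being a unital $*$-subalgebra these form a subgroup $\Sigma$. Thus the homogeneous spaces are indexed by all subgroups $\Sigma\subset\Gamma$, whereas by Proposition 1.3 the quotients $G/H$ correspond to $\widehat{\Theta}$ with $\Theta=\ker(\Gamma\to\Lambda)$, i.e. to the \emph{normal} subgroups of $\Gamma$. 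Taking $\Gamma=S_3$ and $\Sigma=\langle(12)\rangle$, a non-normal copy of $\mathbb Z_2$, therefore yields a homogeneous space $X$ which is not of type $G/H$.

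The main obstacle is part (3): one must be sure that the list of homogeneous spaces genuinely exceeds the list of quotient spaces, which comes down to the slicing argument showing that every $\Delta$-stable subalgebra of $C^*(\Gamma)$ is a group algebra $C^*(\Sigma)$, together with the observation that $\Sigma$ is recovered intrinsically from $C^*(\Sigma)$ (as its group-like unitaries), so that a non-normal $\Sigma$ cannot coincide with any normal $\Theta$. Part (2) is routine once the left-invariance of the fibre relation is extracted, and part (1) is already contained in Proposition 1.2.
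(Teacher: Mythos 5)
Your proof is correct and follows essentially the same route as the paper: (1) by citing Proposition 1.2, (2) by extracting left-invariance of the fibre relation and setting $H=p^{-1}(p(1))$, and (3) by taking the dual of a non-normal subgroup $\Sigma\subset\Gamma$ and invoking Proposition 1.3. The only divergence is that in (3) you prove somewhat more than the paper does -- a classification of all $\Delta$-stable subalgebras of $C^*(\Gamma)$ via the trace-slicing argument, plus the intrinsic recovery of $\Sigma$ from $C^*(\Sigma)$ as its group-like unitaries -- whereas the paper merely exhibits $\widehat{\Theta}$ as homogeneous and leaves these verifications implicit; your extra detail actually closes a small gap in the paper's terse argument.
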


\begin{proof}
Once again these results are well-known, the proof being as follows:

(1) This is clear from Proposition 1.2 above.

(2) Consider a quotient map $p:G\to X$. The invariance condition in the statement tells us that we must have an action $G\curvearrowright X$, given by $g(p(g'))=p(gg')$. Thus:
$$p(g')=p(g'')\implies p(gg')=p(gg''),\ \forall g\in G$$

Now observe that $H=\{g\in G|p(g)=p(1)\}$ is a group, because $g,h\in H$ implies $p(gh)=p(g)=p(1)$, so $gh\in H$, and the other axioms are satisfied as well. Our claim is that we have $X=G/H$, via $p(g)\to gH$. Indeed, $p(g)\to gH$ is well-defined and bijective, because $p(g)=p(g')$ is equivalent to $p(g^{-1}g')=p(1)$, so to $gH=g'H$, as desired. 

(3) Given a discrete group $\Gamma$ and an arbitrary subgroup $\Theta\subset\Gamma$, the quotient space $\widehat{\Gamma}\to\widehat{\Theta}$ is homogeneous. Now by using Proposition 1.3 above, we can see that if $\Theta\subset\Gamma$ is not normal, the quotient space $\widehat{\Gamma}\to\widehat{\Theta}$ is not of the form $G/H$.
\end{proof}

Let us try now to understand the general properties of the homogeneous spaces $G\to X$, in the sense of Theorem 1.5. We recall that any compact quantum group $G$ has a Haar integration functional $\int:C(G)\to\mathbb C$, having the following invariance properties:
$$\left(\int\otimes id\right)\Delta=\left(id\otimes\int\right)\Delta=\int(.)1$$

For the existence and uniqueness of $\int$, we refer to Woronowicz's paper \cite{wo1}. 

We have the following result, which is once again well-known:

\begin{proposition}
Assume that a quotient space $G\to X$ is homogeneous.
\begin{enumerate}
\item The restriction $\Phi:C(X)\to C(G)\otimes C(X)$ of $\Delta$ is a coaction.

\item We have $\Phi(f)=1\otimes f\implies f\in\mathbb C1$, and $(\int\otimes id)\Phi f=\int f$.

\item The restriction of $\int$ is the unique unital form satisfying $(id\otimes\tau)\Phi=\tau(.)1$.
\end{enumerate}
\end{proposition}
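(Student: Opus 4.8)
The plan is to derive all three assertions directly from the coassociativity of $\Delta$ and the defining invariance properties of the Haar functional $\int$, the homogeneity hypothesis $\Delta(C(X))\subset C(G)\otimes C(X)$ guaranteeing that every tensor map appearing below restricts correctly to the relevant subalgebra. For (1), I would observe that $\Phi=\Delta|_{C(X)}$ is a morphism landing in $C(G)\otimes C(X)$ by homogeneity, so both $(id\otimes\Phi)\Phi$ and $(\Delta\otimes id)\Phi$ are legitimate maps $C(X)\to C(G)\otimes C(G)\otimes C(X)$. For $f\in C(X)$ these coincide with $(id\otimes\Delta)\Delta f$ and $(\Delta\otimes id)\Delta f$ respectively, since applying $\Phi$ to the second leg of $\Delta f\in C(G)\otimes C(X)$ is the same as applying $\Delta$. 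Hence the coassociativity of $\Phi$ is inherited verbatim from that of $\Delta$; this step is purely formal.

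For (2), both statements reduce to the left invariance $(\int\otimes id)\Delta=\int(\cdot)1$ together with $\int 1=1$. The identity $(\int\otimes id)\Phi f=\int f$ is just that invariance restricted to $C(X)$. For the implication $\Phi(f)=1\otimes f\Rightarrow f\in\mathbb C1$, I would apply $\int\otimes id$ to the relation $\Delta f=1\otimes f$: the left-hand side gives $(\int f)1$ by invariance, while the right-hand side gives $(\int 1)f=f$, whence $f=(\int f)1\in\mathbb C1$ immediately.

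For (3), existence is the mirror computation: taking $\tau=\int|_{C(X)}$, the right invariance $(id\otimes\int)\Delta=\int(\cdot)1$ gives $(id\otimes\tau)\Phi f=(\int f)1=\tau(f)1$, and $\tau$ is unital because $\int 1=1$. Uniqueness is the genuine content, and I expect it to be the main obstacle. Given any unital form $\tau$ with $(id\otimes\tau)\Phi=\tau(\cdot)1$, I would evaluate $(\int\otimes\tau)\Phi f$ in two ways. Contracting the second leg with $\tau$ first and using the invariance of $\tau$ yields $\int[\tau(f)1]=\tau(f)$; contracting the first leg with $\int$ first and using part (2) yields $\tau[(\int f)1]=\int f$. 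Equating the two gives $\tau(f)=\int f$ for all $f\in C(X)$, i.e. $\tau=\int|_{C(X)}$. The only point needing care is that these two orders of evaluation genuinely compute the same element $(\int\otimes\tau)\Phi f$, a routine Fubini-type interchange valid since $\Phi f$ lies in $C(G)\otimes C(X)$, where both slotwise functionals are defined.
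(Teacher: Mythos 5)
Your proof is correct and follows essentially the same route as the paper: part (1) by noting that coassociativity of $\Phi$ is inherited verbatim from $\Delta$, and part (3), including the uniqueness argument, by the same two-way evaluation of $(\int\otimes\tau)\Phi(f)$, contracting with $\tau$ first to get $\tau(f)$ and with $\int$ first to get $\int f$. The only deviation is in (2): you slice the relation $\Delta f=1\otimes f$ with the Haar state on the first leg (using $\int 1=1$ and invariance), whereas the paper slices with the counit $\varepsilon$ on the second leg to get $f=\varepsilon(f)1$ --- both are valid one-line arguments, yours having the marginal advantage of not invoking $\varepsilon$ at the $C^*$-algebra level.
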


\begin{proof}
These results are all elementary, the proof being as follows:

(1) This is clear from definitions, because $\Delta$ itself is a coaction.

(2) If $f\in C(G)$ is such that $\Delta(f)=1\otimes f$ then $(id\otimes\varepsilon)\Delta f=(id\otimes\varepsilon)(1\otimes f)$, and so $f=\varepsilon(f)1$. Regarding the second assertion, this follows from the right invariance property $(\int\otimes id)\Delta f=\int f$ of the Haar functional of $C(G)$, by restriction to $C(X)$.

(3) The fact that $tr=\int_{|C(X)}$ is $G$-invariant, in the sense that $(id\otimes tr)\Phi f=tr(f)1$, follows from the left invariance property $(id\otimes\int)\Delta f=\int f$ of the Haar functional of $C(G)$. Conversely, assuming that $\tau:C(X)\to\mathbb C$ satisfies $(id\otimes\tau)\Phi f=\tau(f)1$, we have:
$$\left(\int\otimes\tau\right)\Phi(f)
=\begin{cases}
\int(id\otimes\tau)\Phi(f)=\int(\tau(f)1)=\tau(f)\\
\tau (\int\otimes id)\Phi(f)=\tau(tr(f)1)=tr(f)
\end{cases}$$

Thus we have $\tau(f)=tr(f)$ for any $f\in C(X)$, and this finishes the proof.
\end{proof}

Summarizing, we have a notion of noncommutative homogeneous space, which perfectly covers the classical case. In general, however, the group dual case shows that our formalism is more general than that of the quotient spaces $G/H$. See \cite{boc}, \cite{dya}, \cite{kso}, \cite{pod}.

\section{Extended formalism}

We discuss now an extra issue, of analytic nature. The point is that for one of the most basic examples of actions, $O_N^+\curvearrowright S^{N-1}_{\mathbb R,+}$, the associated morphism $\alpha:C(X)\to C(G)$ is not injective. In order to include such examples, we must relax our axioms:

\begin{definition}
An extended homogeneous space consists of a morphism of $C^*$-algebras $\alpha:C(X)\to C(G)$, and a coaction map $\Phi:C(X)\to C(G)\otimes C(X)$, such that
$$\xymatrix@R=12mm@C=20mm{
C(X)\ar[r]^\Phi\ar[d]_\alpha&C(G)\otimes C(X)\ar[d]^{id\otimes\alpha}\\
C(G)\ar[r]^\Delta&C(G)\otimes C(G)
}\qquad\qquad
\xymatrix@R=12mm@C=20mm{
C(X)\ar[r]^\Phi\ar[d]_\alpha&C(G)\otimes C(X)\ar[d]^{\int\otimes id}\\
C(G)\ar[r]^{\int(.)1}&C(X)
}$$
both commute, where $\int$ is the Haar integration over $G$. We then write $G\to X$.
\end{definition}

When $\alpha$ is injective we obtain an homogeneous space in the sense of section 1. The examples with $\alpha$ not injective include the standard action $O_N^+\curvearrowright S^{N-1}_{\mathbb R,+}$, for which we refer to \cite{bgo}, and the standard action $U_N^+\curvearrowright S^{N-1}_{\mathbb C,+}$, discussed in section 3 below.

Here are a few general remarks on the above axioms:

\begin{proposition}
Assume that we have morphisms of $C^*$-algebras $\alpha:C(X)\to C(G)$ and $\Phi:C(X)\to C(G)\otimes C(X)$, satisfying $(id\otimes\alpha)\Phi=\Delta\alpha$.
\begin{enumerate}
\item If $\alpha$ is injective on a dense $*$-subalgebra $A\subset C(X)$, and $\Phi(A)\subset C(G)\otimes A$, then $\Phi$ is automatically a coaction map, and is unique.

\item The ergodicity type condition $(\int\otimes id)\Phi=\int\alpha(.)1$ is equivalent to the existence of a linear form $\lambda:C(X)\to\mathbb C$ such that $(\int\otimes id)\Phi=\lambda(.)1$.
\end{enumerate}
\end{proposition}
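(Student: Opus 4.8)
The plan is to prove (1) by reducing the coassociativity of $\Phi$ to that of $\Delta$, using the compatibility relation $(id\otimes\alpha)\Phi=\Delta\alpha$ as the only available bridge between the two maps. Note first that $\Phi$ is already assumed to be a morphism of $C^*$-algebras, so to show it is a coaction it suffices to verify $(id\otimes\Phi)\Phi=(\Delta\otimes id)\Phi$. I would check that both sides agree after composing with $id\otimes id\otimes\alpha$. For $f\in A$, writing $\Phi(f)=\sum_ig_i\otimes a_i$ with $a_i\in A$ (this is where the hypothesis $\Phi(A)\subset C(G)\otimes A$ enters), one pushes the outer $id\otimes\alpha$ through the inner $\Phi$ via the compatibility relation to get
$$(id\otimes id\otimes\alpha)(id\otimes\Phi)\Phi(f)=(id\otimes\Delta)(id\otimes\alpha)\Phi(f)=(id\otimes\Delta)\Delta\alpha(f),$$
and similarly, moving $\alpha$ to the right-hand factor,
$$(id\otimes id\otimes\alpha)(\Delta\otimes id)\Phi(f)=(\Delta\otimes id)(id\otimes\alpha)\Phi(f)=(\Delta\otimes id)\Delta\alpha(f).$$
Since $\Delta$ is coassociative, the two right-hand sides coincide, so the two expressions agree after applying $id\otimes id\otimes\alpha$.

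The main obstacle is the cancellation of $id\otimes id\otimes\alpha$, and this is exactly where the localized injectivity hypothesis is needed. Both $(id\otimes\Phi)\Phi(f)$ and $(\Delta\otimes id)\Phi(f)$ lie in $C(G)\otimes C(G)\otimes A$ (again by $\Phi(A)\subset C(G)\otimes A$), and on this subspace $id\otimes id\otimes\alpha$ is injective: tensoring a fixed space with the injective linear map $\alpha|_A$ is injective at the algebraic level, since vector spaces are flat. This yields $(id\otimes\Phi)\Phi=(\Delta\otimes id)\Phi$ on $A$, hence on all of $C(X)$ by density and continuity, so $\Phi$ is a coaction. The same injectivity gives uniqueness: any $\Phi$ satisfying the hypotheses must send $f\in A$ to the unique preimage of $\Delta\alpha(f)$ under $id\otimes\alpha$ inside $C(G)\otimes A$, so $\Phi$ is determined on $A$, and therefore everywhere.

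For (2), the forward implication is immediate, with $\lambda=\int\alpha(\cdot)$. For the converse, assume $(\int\otimes id)\Phi=\lambda(\cdot)1$ for some linear form $\lambda$, and apply $\int\otimes id$ to the compatibility relation $(id\otimes\alpha)\Phi=\Delta\alpha$. On the left, using the elementary identity $(\int\otimes id)(id\otimes\alpha)=\alpha(\int\otimes id)$ together with the unitality of $\alpha$, one obtains $\alpha(\lambda(f)1)=\lambda(f)1$; on the right, the right-invariance $(\int\otimes id)\Delta=\int(\cdot)1$ of the Haar functional gives $\int\alpha(f)\cdot1$. Comparing the two yields $\lambda(f)=\int\alpha(f)$ for every $f$, which is precisely the ergodicity condition $(\int\otimes id)\Phi=\int\alpha(\cdot)1$. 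This part is routine once the Haar invariance and the compatibility diagram are in hand, so I expect no real difficulty here; the only genuinely delicate point in the proposition is the algebraic cancellation of $\alpha$ in part (1).
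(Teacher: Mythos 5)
Your proposal is correct and takes essentially the same route as the paper: your part (1) is just the paper's identification $\Phi_{|A}=(id\otimes\alpha_{|A})^{-1}\Delta\alpha_{|A}$ spelled out in detail (compose with $id\otimes id\otimes\alpha$, use coassociativity of $\Delta$, cancel by injectivity of $id\otimes\alpha$ on $C(G)\otimes C(G)\otimes A$), and your part (2) reproduces the paper's computation $(\int\otimes\alpha)\Phi=\lambda(.)1$ versus $(\int\otimes\alpha)\Phi=(\int\otimes id)\Delta\alpha=\int\alpha(.)1$. Your explicit appeal to flatness to justify the cancellation is, if anything, slightly more careful than the paper's tacit inversion.
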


\begin{proof}
Assuming that we have a dense $*$-subalgebra $A\subset C(X)$ as in (1), the restriction $\Phi_{|A}$ is given by $\Phi_{|A}=(id\otimes\alpha_{|A})^{-1}\Delta\alpha_{|A}$, and is therefore coassociative, and unique. By continuity, $\Phi$ itself follows to be coassociative and unique.

Regarding now (2), assuming $(\int\otimes id)\Phi=\lambda(.)1$, we have $(\int\otimes\alpha)\Phi=\lambda(.)1$. But $(\int\otimes\alpha)\Phi=(\int\otimes id)\Delta\alpha=\int\alpha(.)1$, and so we have $\lambda=\int\alpha$, as claimed.
\end{proof}

Given an extended homogeneous space $G\to X$, with associated map $\alpha:C(X)\to C(G)$, we can consider the image of this latter map, $\alpha:C(X)\to C(Y)\subset C(G)$. Equivalently, at the level of noncommutative spaces, we can factorize $G\to Y\subset X$. We have:

\begin{proposition}
Consider an extended homogeneous space $G\to X$.
\begin{enumerate}
\item $\Phi(f)=1\otimes f\implies f\in\mathbb C1$.

\item $tr=\int\alpha$ is the unique unital $G$-invariant form on $C(X)$.

\item The image space obtained by factorizing, $G\to Y$, is homogeneous.
\end{enumerate}
\end{proposition}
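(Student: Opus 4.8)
The plan is to read all three statements off the two commuting squares of Definition 2.1, used in their analytic form $(id\otimes\alpha)\Phi=\Delta\alpha$ (first square) and $(\int\otimes id)\Phi=\int\alpha(.)1=tr(.)1$ (second square, i.e. the ergodicity condition appearing in Proposition 2.2). Throughout I would freely use that $\int$ and $\alpha$ are unital, so that $\int(1)=1$ and $tr(1)=\int\alpha(1)=\int(1)=1$, together with the left invariance $(id\otimes\int)\Delta=\int(.)1$ of the Haar functional.

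For (1), I would apply $(\int\otimes id)$ to the hypothesis $\Phi(f)=1\otimes f$. On one side the ergodicity condition gives $(\int\otimes id)\Phi(f)=tr(f)1$, while on the other $(\int\otimes id)(1\otimes f)=\int(1)f=f$. Hence $f=tr(f)1\in\mathbb C1$, which is exactly the claim, with no further work needed.

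For (2), I would first record that a unital form $\tau:C(X)\to\mathbb C$ is $G$-invariant when $(id\otimes\tau)\Phi=\tau(.)1$, in analogy with Proposition 1.6. To see that $tr=\int\alpha$ is such a form, I would compute $(id\otimes tr)\Phi=(id\otimes\int)(id\otimes\alpha)\Phi=(id\otimes\int)\Delta\alpha$, and then left invariance of the Haar functional turns this into $\int\alpha(.)1=tr(.)1$; unitality was noted above. For uniqueness I would evaluate $(\int\otimes\tau)\Phi(f)$ in the two available ways: factoring through $(id\otimes\tau)$ and using $G$-invariance of $\tau$ gives $\int(\tau(f)1)=\tau(f)$, whereas factoring through $(\int\otimes id)$ and using the ergodicity condition gives $\tau(tr(f)1)=tr(f)$, since $\tau$ is unital. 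Comparing the two yields $\tau=tr$, exactly as in the proof of Proposition 1.6.

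Finally, for (3), recall that $C(Y)=\alpha(C(X))\subset C(G)$ is the image, a $C^*$-subalgebra since $*$-homomorphisms of $C^*$-algebras have closed range, and that by Definition 1.4 homogeneity of $G\to Y$ means $\Delta(C(Y))\subset C(G)\otimes C(Y)$. I would check this on a generic element $y=\alpha(f)$: by the first square, $\Delta(\alpha(f))=\Delta\alpha(f)=(id\otimes\alpha)\Phi(f)$, and since $\Phi(f)\in C(G)\otimes C(X)$ its image under $id\otimes\alpha$ lies in $C(G)\otimes\alpha(C(X))=C(G)\otimes C(Y)$. As every element of $C(Y)$ is of this form, we obtain $\Delta(C(Y))\subset C(G)\otimes C(Y)$. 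The one point demanding genuine care—and which I regard as the main, if mild, obstacle—is precisely this last tensor-product bookkeeping: one must know that the corestriction $id\otimes\alpha:C(G)\otimes C(X)\to C(G)\otimes C(Y)$ is a well-defined morphism onto the completed tensor product $C(G)\otimes C(Y)$, so that the containment lands in $C(G)\otimes C(Y)$ and not merely in $C(G)\otimes C(G)$. Everything else is a direct diagram chase.
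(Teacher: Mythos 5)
Your proposal is correct and follows essentially the same route as the paper: parts (1) and (2) use the ergodicity axiom $(\int\otimes id)\Phi=\int\alpha(.)1$ and the two-sided evaluation of $(\int\otimes\tau)\Phi$ exactly as the paper does (the paper simply cites Proposition 1.6 for the uniqueness step you write out), and your direct check that $\Delta(\alpha(f))=(id\otimes\alpha)\Phi(f)\in C(G)\otimes C(Y)$ is an element-wise rephrasing of the paper's factorization of $\Delta$ through a morphism $\Psi:C(Y)\to C(G)\otimes C(Y)$. The tensor-product bookkeeping you flag, namely that $id\otimes\alpha$ corestricts to a map onto the minimal tensor product $C(G)\otimes C(Y)$, is indeed the only delicate point, and corresponds to what the paper handles via the injectivity of the inclusion $i:C(Y)\to C(G)$ (hence of $id\otimes i$) in its diagram.
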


\begin{proof}
The first assertion follows from $(\int\otimes id)\Phi(f)=\int\alpha(f)1$, which gives $f=\int\alpha(f)1$. The fact that $tr=\int\alpha$ is indeed $G$-invariant can be checked as follows:
$$(id\otimes tr)\Phi f=(id\otimes\smallint\alpha)\Phi f=(id\otimes\smallint)\Delta\alpha f=\smallint\alpha(f)1=tr(f)1$$

As for the uniqueness assertion, this follows as in the proof of Proposition 1.6.

Finally, the condition $(id\otimes\alpha)\Phi=\Delta\alpha$, together with the fact that $i$ is injective, allows us to factorize $\Delta$ into a morphism $\Psi$, as follows:
$$\xymatrix@R=10mm@C=30mm{
C(X)\ar[r]^\Phi\ar[d]_\alpha&C(G)\otimes C(X)\ar[d]^{id\otimes\alpha}\\
C(Y)\ar@.[r]^\Psi\ar[d]_i&C(G)\otimes C(Y)\ar[d]^{id\otimes i}\\
C(G)\ar[r]^\Delta&C(G)\otimes C(G)
}$$

Thus the image space $G\to Y$ is indeed homogeneous, and we are done.
\end{proof}

Finally, we have the following result:

\begin{theorem}
Let $G\to X$ be an extended homogeneous space, and construct quotients $X\to X'$, $G\to G'$ by performing the GNS construction with respect to $\int\alpha,\int$. Then $\alpha$ factorizes into an inclusion $\alpha':C(X')\to C(G')$, and we have an homogeneous space.
\end{theorem}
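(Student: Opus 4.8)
The plan is to carry out the two GNS constructions explicitly, and then to verify that the quotient maps are compatible with $\alpha$. Write $\pi:C(G)\to B(L^2(G))$ and $\sigma:C(X)\to B(L^2(X))$ for the GNS representations of $\int$ and of $tr=\int\alpha$, with cyclic vectors $\widehat 1$ satisfying $\pi(b)\widehat c=\widehat{bc}$ and $\sigma(f)\widehat g=\widehat{fg}$, and set $C(G')=\pi(C(G))$, $C(X')=\sigma(C(X))$, so that the quotient maps $X\to X'$, $G\to G'$ are $\sigma,\pi$ themselves. The one external input I would rely on is that, since $S^2=id$, the Haar functional $\int$ is a \emph{trace}; its GNS vector $\widehat 1$ is then cyclic and separating for the von Neumann completion, so the induced trace on $C(G')$ is faithful. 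Concretely this says: for $p\in C(G)$ with $p\ge 0$, one has $\int(p)=0\Rightarrow\pi(p)=0$.

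The heart of the proof is the identity $\ker\sigma=\ker(\pi\alpha)$, which simultaneously makes $\alpha'$ well defined and injective. For $\ker(\pi\alpha)\subseteq\ker\sigma$, suppose $\pi(\alpha(f))=0$; applying this operator to the vectors $\widehat{\alpha(g)}$ and using that $\alpha$ is a homomorphism gives $\widehat{\alpha(fg)}=0$, that is $tr(g^*f^*fg)=\int(\alpha(fg)^*\alpha(fg))=0$ for all $g\in C(X)$, which is exactly $\sigma(f)=0$. For the reverse inclusion, where the faithfulness above enters, suppose $f\in\ker\sigma$; evaluating $\sigma(f)\widehat 1=0$ gives $tr(f^*f)=0$, hence $\int(p)=0$ with $p=\alpha(f)^*\alpha(f)\ge 0$, and faithfulness of the reduced trace forces $\pi(p)=0$, so that $\pi(\alpha(f))^*\pi(\alpha(f))=0$ and $\pi(\alpha(f))=0$. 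Thus $\alpha$ descends to a well-defined injective $*$-homomorphism $\alpha':C(X')\to C(G')$, which, being isometric, identifies $C(X')$ with a closed subalgebra of $C(G')$.

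It then remains to check the homogeneity condition of Definition 1.4 for $G'\to X'$. Here I would invoke the standard fact that the Haar-state GNS construction yields the reduced quantum group $G'$, so that $\Delta$ descends to a comultiplication $\Delta':C(G')\to C(G')\otimes C(G')$ with $(\pi\otimes\pi)\Delta=\Delta'\pi$. Given $\widehat f=\sigma(f)\in C(X')$ with $f$ in the dense $*$-subalgebra on which $\Phi$ is algebraic, write $\Phi(f)=\sum_i a_i\otimes x_i$ as a finite sum, with $a_i\in C(G)$, $x_i\in C(X)$. The defining relation $(id\otimes\alpha)\Phi=\Delta\alpha$ then gives $\Delta'\alpha'(\widehat f)=(\pi\otimes\pi)\Delta\alpha(f)=\sum_i\pi(a_i)\otimes\alpha'(\sigma(x_i))$, whose second tensor leg lies in $\alpha'(C(X'))$. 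Hence $\Delta'\alpha'(\widehat f)\in C(G')\otimes\alpha'(C(X'))$, and since this space is closed and $\Delta'$ is continuous, the containment persists for all of $C(X')$. Identifying $C(X')$ with $\alpha'(C(X'))\subseteq C(G')$, this is precisely $\Delta'(C(X'))\subseteq C(G')\otimes C(X')$, so $G'\to X'$ is a homogeneous space.

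The step I expect to be the genuine obstacle is the reverse inclusion $\ker\sigma\subseteq\ker(\pi\alpha)$. Without faithfulness one only learns that $\pi(\alpha(f))$ annihilates the $L^2(G)$-closure of $\alpha(C(X))$, which is a proper subspace in general and does not force the operator to vanish; it is exactly the Kac-type hypothesis $S^2=id$, through the tracial and hence cyclic-separating Haar GNS vector, that upgrades $\int(p)=0$ to $\pi(p)=0$ and closes this gap. The remaining ingredients—the existence of the reduced comultiplication $\Delta'$ and the density-and-closedness argument for the tensor-leg containment—are routine, though one should keep an eye on the choice of tensor-product norm when passing from the dense $*$-subalgebra to the completions.
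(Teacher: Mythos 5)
Your proof is correct, and it rests on the same mechanism as the paper's: everything turns on the compatibility $tr=\int\alpha$, which forces the null space of $tr$ on $C(X)$ to be exactly the preimage under $\alpha$ of the null space of $\int$ on $C(G)$. The differences are in the scaffolding. The paper defines the quotients directly as quotients by the state null spaces $\{f\,|\,tr(f^*f)=0\}$ and $\{f\,|\,\int(f^*f)=0\}$, after first factorizing $\alpha$ through its image $Y\subset X$; injectivity of $\alpha'$ is then a one-line, essentially definitional chain $\alpha'p(f)=0\Rightarrow q\alpha(f)=0\Rightarrow\int\alpha(f^*f)=0\Rightarrow tr(f^*f)=0\Rightarrow p(f)=0$. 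What the paper leaves implicit is precisely what you make explicit: that these null spaces are two-sided ideals, so that the quotients are $C^*$-algebras coinciding with the GNS images --- and this is exactly where traciality of the Haar state (the standing Kac assumption $S^2=id$) enters, just as in your faithfulness argument for the reverse inclusion $\ker\sigma\subseteq\ker(\pi\alpha)$. So your identification of that inclusion as the genuine obstacle is accurate, and your resolution is the right one; the paper simply buries the issue inside the phrase ``GNS construction''. You also supply something the paper's proof omits: an explicit verification that $G'\to X'$ is homogeneous, via the reduced comultiplication $\Delta'$ and the relation $(\pi\otimes\pi)\Delta\alpha=(\pi\otimes\pi\alpha)\Phi$. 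Conversely, the paper's detour through $Y$ buys nothing essential that your direct argument lacks.

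One small repair: in the homogeneity step you assume a dense $*$-subalgebra of $C(X)$ on which $\Phi$ takes values in finite algebraic sums of elementary tensors. The axioms of Definition 2.1 do not provide such a Podle\'s-type subalgebra. But you do not need it: for any $f\in C(X)$, the element $\Phi(f)$ lies in the minimal tensor product $C(G)\otimes C(X)$, hence is a norm limit of finite sums of elementary tensors, and since the morphism $\pi\otimes\pi\alpha$ maps $C(G)\otimes C(X)$ into the closed subalgebra $C(G')\otimes\alpha'(C(X'))$, one gets $\Delta'\alpha'\sigma(f)=(\pi\otimes\pi\alpha)\Phi(f)\in C(G')\otimes\alpha'(C(X'))$ for every $f$ at once, with no density-then-closure argument over a special subalgebra.
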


\begin{proof}
We factorize $G\to Y\subset X$ as in Proposition 2.3 (3). By performing the GNS construction with respect to $\int i\alpha,\int i,\int$, we obtain a diagram as follows:
$$\xymatrix@R=10mm@C=30mm{
C(X)\ar[r]^p\ar[d]_\alpha&C(X')\ar[d]^{\alpha'}\ar[dr]^{tr'}\\
C(Y)\ar[r]^q\ar[d]_i&C(Y')\ar[d]^{i'}&\mathbb C\\
C(G)\ar[r]^r&C(G')\ar[ur]_{\int'}
}$$

Indeed, with $tr=\int\alpha$, the GNS quotient maps $p,q,r$ are defined respectively by:
\begin{eqnarray*}
\ker p&=&\{f\in C(X)|tr(f^*f)=0\}\\
\ker q&=&\{f\in C(Y)|\smallint(f^*f)=0\}\\
\ker r&=&\{f\in C(G)|\smallint(f^*f)=0\}
\end{eqnarray*}

Next, we can define factorizations $i',\alpha'$ as above. Observe that $i'$ is injective, and that $\alpha'$ is surjective. Our claim now is that $\alpha'$ is injective as well. Indeed:
$$\alpha'p(f)=0\implies q\alpha(f)=0\implies\int\alpha(f^*f)=0\implies tr(f^*f)=0\implies p(f)=0$$

We conclude that we have $X'=Y'$, and this gives the result.
\end{proof}

\section{Affine spaces}

We discuss now the case that we are really interested in, where $X$ is an algebraic manifold, and $G$ acts affinely on it. Let us first recall that the free complex sphere $S^{N-1}_{\mathbb C,+}$ and the free unitary quantum group $U_N^+$ are constructed as follows:
\begin{eqnarray*}
C(S^{N-1}_{\mathbb C,+})&=&C^*\left(x_1,\ldots,x_N\Big|\sum_ix_ix_i^*=\sum_ix_i^*x_i=1\right)\\
C(U_N^+)&=&C^*\left((u_{ij})_{i,j=1,\ldots,N}\Big|u^*=u^{-1},u^t=\bar{u}^{-1}\right)
\end{eqnarray*}

Here $u=(u_{ij})$ is the square matrix formed by the generators of $C(U_N^+)$. See \cite{wan}.

It is known that $S^{N-1}_{\mathbb C,+}$ is an extended homogeneous space over $U_N^+$, the associated morphisms $\alpha,\Phi$ being given by $\alpha(x_i)=u_{i1}$ and $\Phi(x_i)=\sum_au_{ia}\otimes x_a$. See \cite{bgo}.

Motivated by this fundamental example, let us formulate:

\begin{definition}
An extended homogeneous space $G\to X$ is called affine when $X\subset S^{N-1}_{\mathbb C,+}$ is an algebraic submanifold, $G\subset U_N^+$ is a closed quantum subgroup, and we have
$$\alpha(x_i)=\frac{1}{\sqrt{|I|}}\sum_{b\in I}u_{ib}\quad,\quad
\Phi(x_i)=\sum_au_{ia}\otimes x_a$$
for a certain set of indices $I\subset\{1,\ldots,N\}$. 
\end{definition}

Here the notion of algebraic manifold is the usual one, the coordinates $x_1,\ldots,x_N$ being subject to a number of (noncommutative) polynomial relations. As for the notion of quantum subgroup, we use here the general formalism from section 1 above.

Observe that $U_N^+\to S^{N-1}_{\mathbb C,+}$ is indeed affine in this sense, with $I=\{1\}$. Observe also that the $1/\sqrt{|I|}$ constant appearing above is the correct one, because: 
$$\sum_i\left(\sum_{b\in I}u_{ib}\right)\left(\sum_{b\in I}u_{ib}\right)^*=\sum_i\sum_{b,c\in I}u_{ib}u_{ic}^*=\sum_{b,c\in I}(u^t\bar{u})_{bc}=|I|$$

In general now, a first remark is that the first extended homogeneous space axiom in Definition 2.1, namely $(id\otimes\alpha)\Phi=\Delta\alpha$, is automatic, because we have:
\begin{eqnarray*}
(id\otimes\alpha)\Phi(x_i)&=&\sum_au_{ia}\otimes\alpha(x_a)=\frac{1}{\sqrt{|I|}}\sum_a\sum_{b\in I}u_{ia}\otimes u_{ab}\\
\Delta\alpha(x_i)&=&\frac{1}{\sqrt{|I|}}\sum_{b\in I}\Delta(u_{ib})=\frac{1}{\sqrt{|I|}}\sum_{b\in I}\sum_au_{ia}\otimes u_{ab}
\end{eqnarray*}

We make the standard convention that all the tensor exponents $k$ are ``colored integers'', that is, $k=e_1\ldots e_k$ with $e_i\in\{\circ,\bullet\}$, with $\circ$ corresponding to the usual variables, and with $\bullet$ corresponding to their adjoints. With this convention, we have:

\begin{proposition}
The ergodicity condition $(\int\otimes id)\Phi=\int\alpha(.)1$ is equivalent to
$$(Px^{\otimes k})_{i_1\ldots i_k}=\frac{1}{\sqrt{|I|^k}}\sum_{b_1\ldots b_k\in I}P_{i_1\ldots i_k,b_1\ldots b_k}\quad,\quad\forall k,\forall i_1,\ldots,i_k$$
where $P_{i_1\ldots i_k,j_1\ldots j_k}=\int u_{i_1j_1}^{e_1}\ldots u_{i_kj_k}^{e_k}$, and where $(x^{\otimes k})_{i_1\ldots i_k}=x_{i_1}^{e_1}\ldots x_{i_k}^{e_k}$.
\end{proposition}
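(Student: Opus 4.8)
The plan is to reduce the operator identity $(\int\otimes id)\Phi=\int\alpha(.)1$, which is an equality of linear maps $C(X)\to C(X)$, to its values on a spanning family, and then to read off the two sides explicitly. Since $\Phi$ and $\alpha$ are $*$-homomorphisms and $\int$ is linear and continuous, both sides of the identity are linear and continuous in their argument. Hence it suffices to verify the identity on the monomials $(x^{\otimes k})_{i_1\ldots i_k}=x_{i_1}^{e_1}\ldots x_{i_k}^{e_k}$, as $k$ ranges over the colored integers and $i_1,\ldots,i_k$ over all indices, since these span a dense $*$-subalgebra of $C(X)$.

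First I would compute the left-hand side. Using that $\Phi$ is multiplicative and that $\Phi(x_i^e)=\sum_au_{ia}^e\otimes x_a^e$ (apply $\Phi$ to the defining formula and to its adjoint), expanding the product gives
$$\Phi\big((x^{\otimes k})_{i_1\ldots i_k}\big)=\sum_{a_1\ldots a_k}u_{i_1a_1}^{e_1}\ldots u_{i_ka_k}^{e_k}\otimes x_{a_1}^{e_1}\ldots x_{a_k}^{e_k}.$$
Applying $\int\otimes id$ and recognizing the integral as $P_{i_1\ldots i_k,a_1\ldots a_k}$, the coefficient of each resulting monomial $(x^{\otimes k})_{a_1\ldots a_k}$ is exactly this $P$-entry, so the left-hand side evaluates to $(Px^{\otimes k})_{i_1\ldots i_k}$, by the definition of the matrix action of $P$ on the vector $x^{\otimes k}$.

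Next I would treat the right-hand side the same way. Multiplicativity of $\alpha$ together with $\alpha(x_i^e)=\frac{1}{\sqrt{|I|}}\sum_{b\in I}u_{ib}^e$ gives
$$\alpha\big((x^{\otimes k})_{i_1\ldots i_k}\big)=\frac{1}{\sqrt{|I|^k}}\sum_{b_1\ldots b_k\in I}u_{i_1b_1}^{e_1}\ldots u_{i_kb_k}^{e_k},$$
and applying $\int$ turns each summand into $P_{i_1\ldots i_k,b_1\ldots b_k}$. Thus $\int\alpha\big((x^{\otimes k})_{i_1\ldots i_k}\big)$ equals the scalar $\frac{1}{\sqrt{|I|^k}}\sum_{b_1\ldots b_k\in I}P_{i_1\ldots i_k,b_1\ldots b_k}$, so that as an element of $C(X)$ the right-hand side $\int\alpha(.)1$ is this scalar times $1$. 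Equating the two computations on each monomial yields precisely the asserted formula, and conversely the displayed formula, holding for all $k$ and all indices, forces the two linear maps to agree on the dense subalgebra, hence everywhere.

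I do not expect a genuine obstacle: the argument is a direct expansion. The only points requiring care are the bookkeeping of the colored exponents $e_1,\ldots,e_k$ — checking that $\Phi$ and $\alpha$ respect adjoints so that the same colors propagate to the $u$-entries — and the observation that verifying the identity on the generating monomials suffices, which rests on linearity and continuity of both sides rather than on any multiplicativity of $(\int\otimes id)\Phi$, which in fact does not hold.
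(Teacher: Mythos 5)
Your proposal is correct and follows essentially the same route as the paper: both reduce the identity of linear maps to its values on the spanning monomials $x_{i_1}^{e_1}\ldots x_{i_k}^{e_k}$ and then expand both sides using the multiplicativity of $\Phi$ and $\alpha$, recognizing the resulting integrals as entries of $P$. Your write-up merely makes explicit the bookkeeping (colored exponents, density, linearity/continuity) that the paper's chain of equivalences leaves implicit.
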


\begin{proof}
We have indeed the following computation:
\begin{eqnarray*}
&&\left(\int\otimes id\right)\Phi=\int\alpha(.)1\\
&\iff&\left(\int\otimes id\right)\Phi(x_{i_1}^{e_1}\ldots x_{i_k}^{e_k})=\int\alpha(x_{i_1}^{e_1}\ldots x_{i_k}^{e_k}),\forall k,\forall i_1,\ldots i_k\\
&\iff&\sum_{a_1\ldots a_k}P_{i_1\ldots i_k,a_1\ldots a_k}x_{a_1}^{e_1}\ldots x_{a_k}^{e_k}=\frac{1}{\sqrt{|I|^k}}\sum_{b_1\ldots b_k\in I}P_{i_1\ldots i_k,b_1\ldots b_k},\forall k,\forall i_1,\ldots,i_k
\end{eqnarray*}

But this gives the formula in the statement, and we are done.
\end{proof}

As a consequence, we have the following result:

\begin{theorem}
Given a closed quantum subgroup $G\subset U_N^+$, and a set $I\subset\{1,\ldots,N\}$, if we consider the following $C^*$-subalgebra and the following quotient $C^*$-algebra,
\begin{eqnarray*}
C(X_{G,I}^{min})&=&\left<\frac{1}{\sqrt{|I|}}\sum_{b\in I}u_{ib}\Big|i=1,\ldots,N\right>\subset C(G)\\
C(X_{G,I}^{max})&=&C(S^{N-1}_{\mathbb C,+})\Big/\left<(Px^{\otimes k})_{i_1\ldots i_k}=\frac{1}{\sqrt{|I|^k}}\sum_{b_1\ldots b_k\in I}P_{i_1\ldots i_k,b_1\ldots b_k}\Big|\forall k,\forall i_1,\ldots i_k\right>
\end{eqnarray*}
then we have maps $G\to X_{G,I}^{min}\subset X_{G,I}^{max}\subset S^{N-1}_{\mathbb C,+}$, the space $G\to X_{G,I}^{max}$ is affine extended homogeneous, and any affine homogeneous space $G\to X$ appears as $X_{G,I}^{min}\subset X\subset X_{G,I}^{max}$.
\end{theorem}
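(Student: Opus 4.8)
The plan is to reduce everything to a single identity in $C(G)$ coming from the left invariance of the Haar functional, and then to feed it into Propositions 2.3 and 3.2. Write $P_{i,j}=\int(u^{\otimes k})_{i,j}$ for the colored multi-index entries, so that $(u^{\otimes k})_{i,j}=u_{i_1j_1}^{e_1}\cdots u_{i_kj_k}^{e_k}$. Since $\Delta$ is multiplicative on matrix coefficients we have $\Delta((u^{\otimes k})_{i,j})=\sum_a(u^{\otimes k})_{i,a}\otimes(u^{\otimes k})_{a,j}$, and applying $(\int\otimes id)$ together with the invariance $(\int\otimes id)\Delta=\int(.)1$ gives the key relation
$$\sum_aP_{i,a}(u^{\otimes k})_{a,j}=P_{i,j}\cdot1\quad\text{in }C(G).$$
First I would put $y_i=\frac{1}{\sqrt{|I|}}\sum_{b\in I}u_{ib}$ and check that the $y_i$ satisfy the defining relations of $X_{G,I}^{max}$: the sphere relations hold by the computation following Definition 3.1, while expanding $y_{a_1}^{e_1}\cdots y_{a_k}^{e_k}=\frac{1}{\sqrt{|I|^k}}\sum_{b\in I^k}(u^{\otimes k})_{a,b}$ and applying the key relation yields $\sum_aP_{i,a}y_{a_1}^{e_1}\cdots y_{a_k}^{e_k}=\frac{1}{\sqrt{|I|^k}}\sum_{b\in I^k}P_{i,b}$, which is exactly the ergodicity relation. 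Hence the canonical morphism $C(S^{N-1}_{\mathbb C,+})\to C(G)$, $x_i\mapsto y_i$, annihilates those relations and factors as $\alpha^{max}:C(X_{G,I}^{max})\to C(G)$.

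Next I would define $\Phi(x_i)=\sum_au_{ia}\otimes x_a$ and check it descends to $C(X_{G,I}^{max})$. The free sphere already carries this coaction, so only the ergodicity relations need to be checked: using $\Phi((x^{\otimes k})_a)=\sum_c(u^{\otimes k})_{a,c}\otimes(x^{\otimes k})_c$ and the key relation once more gives $\Phi((Px^{\otimes k})_i)=1\otimes(Px^{\otimes k})_i$, so each ergodicity generator maps to $1\otimes(\text{same generator})=0$ in $C(G)\otimes C(X_{G,I}^{max})$. Coassociativity is then a one-line check on the generators $x_i$. The first square of Definition 2.1, namely $(id\otimes\alpha)\Phi=\Delta\alpha$, is automatic (the computation after Definition 3.1), and the second square is the ergodicity condition, which by Proposition 3.2 is equivalent to the very relations built into $C(X_{G,I}^{max})$; thus $G\to X_{G,I}^{max}$ is affine extended homogeneous. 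Applying Proposition 2.3(3) to this space, the image space obtained by factorizing $\alpha^{max}$ is homogeneous, and its algebra is the subalgebra of $C(G)$ generated by the $y_i$, that is $C(X_{G,I}^{min})$; this produces the chain $G\to X_{G,I}^{min}\subset X_{G,I}^{max}\subset S^{N-1}_{\mathbb C,+}$.

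For the last assertion, let $G\to X$ be any affine extended homogeneous space. Since $X$ satisfies the ergodicity axiom, Proposition 3.2 shows its coordinates obey the ergodicity relations, so the defining surjection $C(S^{N-1}_{\mathbb C,+})\to C(X)$ kills the ideal cutting out $C(X_{G,I}^{max})$ and factors through it, giving $X\subset X_{G,I}^{max}$. On the other hand $\alpha$ sends the coordinates to $\frac{1}{\sqrt{|I|}}\sum_{b\in I}u_{ib}$, so its image is exactly $C(X_{G,I}^{min})$, and corestricting $\alpha$ yields a surjection $C(X)\to C(X_{G,I}^{min})$, i.e. $X_{G,I}^{min}\subset X$. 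I expect the main obstacle to be the well-definedness of the two structural maps on the quotient $C(X_{G,I}^{max})$: one must verify that imposing the infinitely many ergodicity relations (over all colored $k$) is consistent with mapping into $C(G)$ and is stable under $\Phi$. Both points collapse onto the single left-invariance identity above, so once that identity is established the remainder is bookkeeping with Propositions 2.3 and 3.2.
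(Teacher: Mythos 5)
Your proposal is correct and follows essentially the same route as the paper: both rest on the invariance identity $Pu^{\otimes k}=P$ (your ``key relation''), use it to verify the $X_{G,I}^{max}$ relations for the variables $\frac{1}{\sqrt{|I|}}\sum_{b\in I}u_{ib}$ and for the coaction variables, and then invoke Proposition 3.2 to identify the built-in relations with ergodicity. Your extra bookkeeping (factoring through the ideal, citing Proposition 2.3(3), and spelling out the last assertion that the paper calls ``clear'') is just a more explicit rendering of the same argument.
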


\begin{proof}
Consider the standard coordinates on $X_{G,I}^{min}$, namely $X_i=\frac{1}{\sqrt{|I|}}\sum_{b\in I}u_{ib}$. The fact that we have $X_{G,I}^{min}\subset S^{N-1}_{\mathbb C,+}$ follows from the following computations:
\begin{eqnarray*}
\sum_iX_iX_i^*&=&\frac{1}{|I|}\sum_i\sum_{b,c\in I}u_{ib}u_{ic}^*=\frac{1}{|I|}\sum_{b,c\in I}(u^t\bar{u})_{bc}=1\\
\sum_iX_i^*X_i&=&\frac{1}{|I|}\sum_i\sum_{b,c\in I}u_{ib}^*u_{ic}=\frac{1}{|I|}\sum_{b,c\in I}(u^*u)_{bc}=1
\end{eqnarray*}

In order to prove now that we have $X_{G,I}^{min}\subset X_{G,I}^{max}$, we must check the fact that the defining relations for $X_{G,I}^{max}$ are satisfied by the variables $X_i$. But, we have indeed:
\begin{eqnarray*}
(PX^{\otimes k})_{i_1\ldots i_k}
&=&\frac{1}{\sqrt{|I|^k}}\sum_{a_1\ldots a_k}P_{i_1\ldots i_k,a_1\ldots a_k}\sum_{b_1\ldots b_k\in I}u_{a_1b_1}^{e_1}\ldots u_{a_kb_k}^{e_k}\\
&=&\frac{1}{\sqrt{|I|^k}}\sum_{b_1\ldots b_k\in I}(Pu^{\otimes k})_{i_1\ldots i_k,b_1\ldots b_k}\\
&=&\frac{1}{\sqrt{|I|^k}}\sum_{b_1\ldots b_k\in I}P_{i_1\ldots i_k,b_1\ldots b_k}
\end{eqnarray*}

Here $Pu^{\otimes k}=P$ comes from the invariance properties of $\int$. See \cite{wo1}.

Let us prove now that we have an action $G\curvearrowright X_{G,I}^{max}$. For this purpose, we must show that the variables $Z_i=\sum_au_{ia}\otimes x_a$ satisfy the defining relations for $X_{G,I}^{max}$. We have:
\begin{eqnarray*}
(PZ^{\otimes k})_{i_1\ldots i_k}
&=&\sum_{a_1\ldots a_k}P_{i_1\ldots i_k,a_1\ldots a_k}\sum_{c_1\ldots c_k}u_{a_1c_1}^{e_1}\ldots u_{a_kc_k}^{e_k}\otimes x_{c_1}^{e_1}\ldots x_{c_k}^{e_k}\\
&=&\sum_{c_1\ldots c_k}(Pu^{\otimes k})_{i_1\ldots i_k,c_1\ldots c_k}\otimes x_{c_1}^{e_1}\ldots x_{c_k}^{e_k}=\sum_{c_1\ldots c_k}P_{i_1\ldots i_k,c_1\ldots c_k}\otimes x_{c_1}^{e_1}\ldots x_{c_k}^{e_k}\\
&=&1\otimes \frac{1}{\sqrt{|I|^k}}(Px^{\otimes k})_{i_1\ldots i_k}
=1\otimes\frac{1}{\sqrt{|I|^k}}\sum_{b_1\ldots b_k\in I}P_{i_1\ldots i_k,b_1\ldots b_k}
\end{eqnarray*}

Thus we have an action $G\curvearrowright X_{G,I}^{max}$, and since this action is ergodic by Proposition 3.2, we have an extended homogeneous space. Finally, the last assertion is clear.
\end{proof}

As a conclusion, the affine homogeneous spaces over a given closed subgroup $G\subset U_N^+$, in the sense of Definition 3.1, are the intermediate spaces $X_{G,I}^{min}\subset X\subset X_{G,I}^{max}$ having an action of $G$, with the maximal space $X_{G,I}^{max}$ known to be affine homogeneous.

\section{Integration theory}

In this section we improve Theorem 3.3, by constructing a ``canonical'' intermediate space $X_{G,I}^{min}\subset X_{G,I}\subset X_{G,I}^{max}$, using the Schur-Weyl dual of $G$, and we present as well a Weingarten integration formula, valid for any affine homogeneous space $G\to X$.

Let us first recall the usual Weingarten formula \cite{bco}, \cite{csn}, \cite{wei}:

\begin{proposition}
Assuming that $\{\xi_\pi|\pi\in D\}$ is a basis of $Fix(u^{\otimes k})$, we have
$$\int u_{i_1j_1}^{e_1}\ldots u_{i_kj_k}^{e_k}=\sum_{\pi,\sigma\in D}(\xi_\pi)_{i_1\ldots i_k}\overline{(\xi_\sigma)_{j_1\ldots j_k}}W_{kN}(\pi,\sigma)$$
where $W_{kN}=G_{kN}^{-1}$, with $G_{kN}(\pi,\sigma)=<\xi_\pi,\xi_\sigma>$.
\end{proposition}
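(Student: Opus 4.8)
The plan is to recognize both sides of the formula as the matrix of one and the same object: the orthogonal projection onto $Fix(u^{\otimes k})$. Concretely, I would set $v=u^{\otimes k}$, viewed as a corepresentation of $C(G)$ on $(\mathbb C^N)^{\otimes k}$ with colored entries $v_{IJ}=u_{i_1j_1}^{e_1}\ldots u_{i_kj_k}^{e_k}$, where $I=i_1\ldots i_k$ and $J=j_1\ldots j_k$, and define $P$ to be the matrix with entries $P_{IJ}=\int v_{IJ}$, i.e. $P=(\int\otimes id)v$. The proof then splits into two independent parts: first, that $P$ is the orthogonal projection onto $Fix(v)$, and second, the purely linear-algebraic fact that the orthogonal projection onto a subspace given by a (not necessarily orthonormal) basis is encoded by the inverse Gram matrix exactly as stated.

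First I would show that $P$ is a projection. Left invariance $(\int\otimes id)\Delta=\int(.)1$ applied to $v_{IJ}$ gives $\sum_K\int(v_{IK})v_{KJ}=\int(v_{IJ})1$, and applying $\int$ to the remaining leg yields $\sum_K\int(v_{IK})\int(v_{KJ})=\int(v_{IJ})$, that is $P^2=P$. Self-adjointness comes from positivity of the Haar state together with the relation $S(v_{IJ})=v_{JI}^*$ inherited from $S(u_{ij})=u_{ji}^*$, and the invariance $\int S=\int$: using the standing assumption $S^2=id$ one gets $v_{IJ}^*=S(v_{JI})$, hence $\overline{P_{IJ}}=\int v_{IJ}^*=\int S(v_{JI})=\int v_{JI}=P_{JI}$, so $P$ is Hermitian and therefore an orthogonal projection. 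To identify its image, note that for $\xi\in Fix(v)$, meaning $\sum_Jv_{IJ}\xi_J=\xi_I$, applying $\int$ gives $P\xi=\xi$; conversely, for arbitrary $\zeta$ the right invariance $\sum_Kv_{IK}\int(v_{KJ})=\int(v_{IJ})1$ gives $v(P\zeta)=P\zeta$, so $P\zeta\in Fix(v)$. Hence the image of $P$ is exactly $Fix(u^{\otimes k})$.

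The second ingredient is elementary: given a basis $\{\xi_\pi\mid\pi\in D\}$ of a subspace, with Gram matrix $G_{kN}(\pi,\sigma)=\langle\xi_\pi,\xi_\sigma\rangle$, the orthogonal projection onto its span is $P=\sum_{\pi,\sigma}|\xi_\pi\rangle\,W_{kN}(\pi,\sigma)\,\langle\xi_\sigma|$ with $W_{kN}=G_{kN}^{-1}$. I would verify this by checking that this operator fixes each $\xi_\tau$ (using $\sum_\sigma W_{kN}(\pi,\sigma)G_{kN}(\sigma,\tau)=\delta_{\pi\tau}$), annihilates the orthogonal complement of the span, and is self-adjoint because $G_{kN}^{-1}$ is Hermitian; these three properties characterize the orthogonal projection uniquely. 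Reading off the $(I,J)$ matrix entry gives $P_{IJ}=\sum_{\pi,\sigma}(\xi_\pi)_I\,W_{kN}(\pi,\sigma)\,\overline{(\xi_\sigma)_J}$, and equating this with $P_{IJ}=\int v_{IJ}$ produces the asserted Weingarten formula.

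I expect the main obstacle to be the clean identification of $P$ with the orthogonal projection onto the fixed space, since this requires marshalling left invariance, right invariance and $S$-invariance of the Haar functional simultaneously in the colored tensor setting in order to obtain idempotency, self-adjointness, and the precise image in one coherent argument. Once that is in place, the Gram-matrix step and the final matching of coordinates are routine.
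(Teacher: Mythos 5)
Your proof is correct and takes essentially the same route as the paper: both arguments identify the two sides of the formula as one and the same operator, namely the orthogonal projection onto $Fix(u^{\otimes k})$, once as $P=(\int\otimes id)u^{\otimes k}$ and once via the inverse Gram matrix. The only difference is that the paper outsources the key fact that $P$ is this orthogonal projection to \cite{bco} (``by standard linear algebra''), whereas you prove it in full from Haar invariance, positivity, and the antipode relation $\int S=\int$ valid under the standing assumption $S^2=id$ — a self-contained filling-in of a cited step, not a different method.
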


\begin{proof}
When the exponent $k=e_1\ldots e_k$ is fixed, and the indices $i_1,\ldots,i_k$ and $j_1,\ldots,j_k$ vary, the quantities on the left in the statement form the matrix $P$, and the quantities on the right form a certain matrix $P'$. We must prove that we have $P=P'$. 

For any vector $x\in(\mathbb C^N)^{\otimes k}$, written $x=(x_{i_1\ldots i_k})$, we have:
\begin{eqnarray*}
(P'x)_{i_1\ldots i_k}
&=&\sum_{j_1\ldots j_k}\sum_{\pi,\sigma\in D}(\xi_\pi)_{i_1\ldots i_k}\overline{(\xi_\sigma)_{j_1\ldots j_k}}W_{kN}(\pi,\sigma)x_{j_1\ldots j_k}\\
&=&\sum_{\pi,\sigma\in D}<x,\xi_\sigma>W_{kN}(\pi,\sigma)(\xi_\pi)_{i_1\ldots i_k}
\end{eqnarray*}

Since this equality holds for any choice of $i_1,\ldots,i_k$, we deduce that we have:
$$P'x=\sum_{\pi,\sigma\in D}<x,\xi_\sigma>W_{kN}(\pi,\sigma)\xi_\pi$$

By standard linear algebra, we have then $Px=P'x$, and so $P=P'$. See \cite{bco}.
\end{proof}

As a first application, we have the following result:

\begin{proposition}
If $G\to X$ is an extended homogeneous space, the integration map $\int_X=\int\alpha$ is given by the Weingarten type formula
$$\int_Xx_{i_1}^{e_1}\ldots x_{i_k}^{e_k}=\sum_{\pi,\sigma\in D}(\xi_\pi)_{i_1\ldots i_k}K_I(\sigma)W_{kN}(\pi,\sigma)$$
where $\{\xi_\pi|\pi\in D\}$ is a basis of $Fix(u^{\otimes k})$, and $K_I(\sigma)=\frac{1}{\sqrt{|I|^k}}\sum_{b_1\ldots b_k\in I}\overline{(\xi_\sigma)_{b_1\ldots b_k}}$.
\end{proposition}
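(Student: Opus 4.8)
The goal is to compute $\int_X x_{i_1}^{e_1}\cdots x_{i_k}^{e_k}=\int\alpha(x_{i_1}^{e_1}\cdots x_{i_k}^{e_k})$ by pushing the formula $\alpha(x_i)=\frac{1}{\sqrt{|I|}}\sum_{b\in I}u_{ib}$ through the Haar integral and invoking the Weingarten formula from Proposition 4.1. First I would apply $\alpha$, which is multiplicative and $*$-preserving, to the monomial. Since $\alpha(x_i)=\frac{1}{\sqrt{|I|}}\sum_{b\in I}u_{ib}$ and $\alpha(x_i^*)=\frac{1}{\sqrt{|I|}}\sum_{b\in I}u_{ib}^*$, the colored product becomes
\begin{eqnarray*}
\alpha(x_{i_1}^{e_1}\cdots x_{i_k}^{e_k})
&=&\frac{1}{\sqrt{|I|^k}}\sum_{b_1,\ldots,b_k\in I}u_{i_1b_1}^{e_1}\cdots u_{i_kb_k}^{e_k}.
\end{eqnarray*}

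Next I would take $\int$ of both sides and recognize the result in terms of the matrix $P$. By definition $\int u_{i_1b_1}^{e_1}\cdots u_{i_kb_k}^{e_k}=P_{i_1\ldots i_k,b_1\ldots b_k}$, so that
\begin{eqnarray*}
\int_X x_{i_1}^{e_1}\cdots x_{i_k}^{e_k}
&=&\frac{1}{\sqrt{|I|^k}}\sum_{b_1,\ldots,b_k\in I}P_{i_1\ldots i_k,b_1\ldots b_k}.
\end{eqnarray*}
At this point I would substitute the Weingarten expression for $P$ from Proposition 4.1, giving $P_{i_1\ldots i_k,b_1\ldots b_k}=\sum_{\pi,\sigma\in D}(\xi_\pi)_{i_1\ldots i_k}\overline{(\xi_\sigma)_{b_1\ldots b_k}}W_{kN}(\pi,\sigma)$.

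Finally I would interchange the finite sums and isolate the factor depending on the indices $b_1,\ldots,b_k$. Since $(\xi_\pi)_{i_1\ldots i_k}$ and $W_{kN}(\pi,\sigma)$ do not depend on the $b$'s, the only $b$-dependent quantity is $\overline{(\xi_\sigma)_{b_1\ldots b_k}}$, and summing it against the prefactor yields exactly
$$K_I(\sigma)=\frac{1}{\sqrt{|I|^k}}\sum_{b_1,\ldots,b_k\in I}\overline{(\xi_\sigma)_{b_1\ldots b_k}},$$
which is the quantity defined in the statement. Regrouping the remaining factors produces the claimed formula. I expect no genuine obstacle here: the argument is a direct substitution of two previously established identities (the action of $\alpha$ on monomials, which mirrors the computation already displayed after Definition 3.1, and the Weingarten formula of Proposition 4.1), followed by a routine index bookkeeping step. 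The only point requiring minor care is the colored-exponent convention, ensuring that $\alpha$ is applied consistently to each factor $x_{i_r}^{e_r}$ according to whether $e_r=\circ$ or $e_r=\bullet$; once this is respected, the derivation closes immediately.
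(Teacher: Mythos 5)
Your proposal is correct and follows essentially the same route as the paper's own proof: expand $\alpha$ on the colored monomial to get $\frac{1}{\sqrt{|I|^k}}\sum_{b_1\ldots b_k\in I}\int u_{i_1b_1}^{e_1}\ldots u_{i_kb_k}^{e_k}$, substitute the Weingarten formula of Proposition 4.1, and regroup the $b$-sum into $K_I(\sigma)$. The only cosmetic difference is that you make the intermediate matrix $P$ and the colored-exponent bookkeeping explicit, which the paper leaves implicit.
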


\begin{proof}
By using the formula in Proposition 4.1, we have:
\begin{eqnarray*}
\int_Xx_{i_1}^{e_1}\ldots x_{i_k}^{e_k}
&=&\frac{1}{\sqrt{|I|^k}}\sum_{b_1\ldots b_k\in I}\int u_{i_1b_1}^{e_1}\ldots u_{i_kb_k}^{e_k}\\
&=&\frac{1}{\sqrt{|I|^k}}\sum_{b_1\ldots b_k\in I}\sum_{\pi,\sigma\in D}(\xi_\pi)_{i_1\ldots i_k}\overline{(\xi_\sigma)_{b_1\ldots b_k}}W_{kN}(\pi,\sigma)
\end{eqnarray*}

But this gives the formula in the statement, and we are done.
\end{proof}

Let us go back now to Theorem 3.3. We know from there that $X_{G,I}^{max}\subset S^{N-1}_{\mathbb C,+}$ is constructed by imposing to the coordinates the conditions $Px^{\otimes k}=P^I$, where:
$$P_{i_1\ldots i_k,j_1\ldots j_k}=\int u_{i_1j_1}^{e_1}\ldots u_{i_kj_k}^{e_k}\quad,\quad
P^I_{i_1\ldots i_k}=\frac{1}{\sqrt{|I|^k}}\sum_{j_1\ldots j_k\in I}P_{i_1\ldots i_k,j_1\ldots j_k}$$

These quantities can be computed by using the Weingarten formula, and working out the details leads to the construction of a certain smaller space $X_{G,I}$, as follows:

\begin{theorem}
Given a closed quantum subgroup $G\subset U_N^+$, and a set $I\subset\{1,\ldots,N\}$, if we consider the following quotient algebra
$$C(X_{G,I})=C(S^{N-1}_{\mathbb C,+})\Big/\left<\sum_{a_1\ldots a_k}\xi_{a_1\ldots a_k}x_{a_1}^{e_1}\ldots x_{a_k}^{e_k}=\frac{1}{\sqrt{|I|^k}}\sum_{b_1\ldots b_k\in I}\xi_{b_1\ldots b_k}\Big|\forall k,\forall\xi\in Fix(u^{\otimes k})\right>$$
we obtain in this way an affine homogeneous space $G\to X_{G,I}$.
\end{theorem}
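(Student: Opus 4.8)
The plan is to deduce the statement from Theorem 3.3 — more precisely from its concluding remark, which characterizes the affine homogeneous spaces over $G$ as exactly the intermediate spaces $X_{G,I}^{min}\subset X\subset X_{G,I}^{max}$ carrying an action of $G$. So I must check that $X_{G,I}$ is such an intermediate space, and that $\Phi(x_i)=\sum_a u_{ia}\otimes x_a$ descends to a coaction on $C(X_{G,I})$. The engine for both is the observation, implicit in Proposition 4.1, that $P$ is the orthogonal projection of $(\mathbb C^N)^{\otimes k}$ onto $Fix(u^{\otimes k})$: since $W_{kN}=G_{kN}^{-1}$ is the inverse Gram matrix of $\{\xi_\pi\}$, one has $(Pv)_{i_1\ldots i_k}=\sum_{\pi,\sigma}W_{kN}(\pi,\sigma)\langle\xi_\sigma,v\rangle(\xi_\pi)_{i_1\ldots i_k}$, the usual formula for that projection. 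Both $Px^{\otimes k}$ and $P^I=\frac{1}{\sqrt{|I|^k}}P\eta_I$, with $\eta_I$ the indicator of $I^k$, lie in the range of $P$, so the relation $Px^{\otimes k}=P^I$ defining $X_{G,I}^{max}$ is equivalent to the equality of the pairings of $x^{\otimes k}$ and $\frac{1}{\sqrt{|I|^k}}\eta_I$ against every vector of $Fix(u^{\otimes k})$. Written out over a basis of $Fix(u^{\otimes k})$, and with $k$ varying, these pairings are precisely the relations defining $C(X_{G,I})$, so that in fact $X_{G,I}=X_{G,I}^{max}$; in particular $X_{G,I}^{min}\subset X_{G,I}\subset X_{G,I}^{max}$.

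It then remains to produce the action, i.e. to verify that $Z_i=\sum_a u_{ia}\otimes x_a$ satisfies the defining relations of $X_{G,I}$ in $C(G)\otimes C(X_{G,I})$. Expanding $Z^{\otimes k}$ and contracting with a fixed vector $\xi$, everything rests on the invariance identity $\sum_{a_1\ldots a_k}\xi_{a_1\ldots a_k}(u^{\otimes k})_{a_1\ldots a_k,c_1\ldots c_k}=\xi_{c_1\ldots c_k}$, valid for the relevant fixed vectors because $u^{\otimes k}$ is unitary and $\xi$ is fixed. With it the contraction collapses to $1\otimes\sum_c\xi_c\, x_{c_1}^{e_1}\ldots x_{c_k}^{e_k}$, which by the relation already imposed in $C(X_{G,I})$ equals $1\otimes\frac{1}{\sqrt{|I|^k}}\sum_{b\in I}\xi_b$, as required, so $\Phi$ is well defined. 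The same identity, applied to $\alpha(x_i)=\frac{1}{\sqrt{|I|}}\sum_{b\in I}u_{ib}$, re-proves $X_{G,I}^{min}\subset X_{G,I}$. Finally the compatibility $(id\otimes\alpha)\Phi=\Delta\alpha$ is automatic, as noted after Definition 3.1, while ergodicity amounts by Proposition 3.2 to the relations $Px^{\otimes k}=P^I$, which hold since $X_{G,I}\subset X_{G,I}^{max}$; thus $G\to X_{G,I}$ is affine homogeneous.

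The step I expect to be delicate is the conjugation and colour bookkeeping underlying the two paragraphs above. In the projection computation the vectors actually paired against $x^{\otimes k}$ are the conjugates $\overline{\xi_\sigma}$, so matching them with the $\xi\in Fix(u^{\otimes k})$ of the statement requires using $\overline{Fix(u^{\otimes k})}=Fix(u^{\otimes\bar k})$ together with the fact that the relations range over all coloured exponents. The same care is needed in the invariance identity, which is really a statement about the transpose $(u^{\otimes k})^t$, hence about $\overline{Fix(u^{\otimes k})}$ rather than $Fix(u^{\otimes k})$. Once the bars and colours are placed consistently, all of the inclusions and the action reduce to this one identity, and the theorem follows.
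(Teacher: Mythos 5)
Your proof follows essentially the same route as the paper's: both reduce to the criterion extracted from Theorem 3.3 and Proposition 3.2 (an intermediate space $X_{G,I}^{min}\subset X\subset X_{G,I}^{max}$ carrying the affine coaction is affine homogeneous, with $(id\otimes\alpha)\Phi=\Delta\alpha$ automatic and ergodicity equivalent to the relations $Px^{\otimes k}=P^I$), both compare $X_{G,I}$ with $X_{G,I}^{max}$ through the Weingarten/projection linear algebra of Proposition 4.1, and both obtain $X_{G,I}^{min}\subset X_{G,I}$ and the well-definedness of $\Phi$ from the same row-invariance identity $\sum_a\xi_a(u^{\otimes k})_{ab}=\xi_b$. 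The one genuine difference is that you upgrade the paper's inclusion $X_{G,I}\subset X_{G,I}^{max}$ to an equality $X_{G,I}=X_{G,I}^{max}$, the converse implication coming from the invertibility of the Gram matrix (equivalently, from $P$ being the orthogonal projection onto $Fix(u^{\otimes k})$ and $\{\xi_\pi\}$ being a basis). This sharpening is correct, at the same level of rigor as the paper's own argument, and it is worth noting that it contradicts the paper's framing of $X_{G,I}$ as a ``certain smaller space'': the canonical intermediate space is not smaller at all, at least whenever the fixed point spaces are conjugation-stable.

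That caveat is the one substantive point where your write-up, like the paper's, remains incomplete, and your proposed repair does not quite close it. The pairing produced by the projection argument, and likewise the row-invariance identity, involve $\overline{\xi}$ rather than $\xi$: the identity $\sum_a\eta_a(u^{\otimes k})_{ab}=\eta_b$ holds precisely when $\bar\eta\in Fix(u^{\otimes k})$, i.e. $\eta\in Fix(u^{\otimes\bar k})$. Invoking $\overline{Fix(u^{\otimes k})}=Fix(u^{\otimes\bar k})$ and letting $k$ range over all coloured exponents only re-expresses the mismatch: the relations coming from $Px^{\otimes k}=P^I$ pair vectors of $Fix(u^{\otimes\bar k})$ with $k$-coloured monomials, while the statement pairs vectors of $Fix(u^{\otimes k})$ with $k$-coloured monomials, and reindexing $k\mapsto\bar k$ cannot make these two families coincide unless $Fix(u^{\otimes k})=Fix(u^{\otimes\bar k})$ for every $k$. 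So the honest conclusion is either (a) observe that each $Fix(u^{\otimes k})$ is spanned by real vectors, hence conjugation-stable, in the cases the paper actually exploits (easy quantum groups and their products, where the $T_\pi$ have $0$-$1$ entries), or (b) read the defining relations of $C(X_{G,I})$ as indexed by $\overline{Fix(u^{\otimes k})}$, after which your argument and the paper's both go through verbatim and your equality $X_{G,I}=X_{G,I}^{max}$ holds unconditionally. Since the paper's own proof silently commits exactly the same conjugation imprecision in all three of its steps, this is not a defect of your proposal relative to the paper, and you deserve credit for flagging it explicitly.
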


\begin{proof}
We use Theorem 3.3. Let us first prove that we have an inclusion $X_{G,I}\subset X_{G,I}^{max}$. According to the integration formula in Proposition 4.1, we have:
\begin{eqnarray*}
(Px^{\otimes k})_{i_1\ldots i_k}&=&\sum_{a_1\ldots a_k}\sum_{\pi,\sigma\in D}(\xi_\pi)_{i_1\ldots i_k}\overline{(\xi_\sigma)_{a_1\ldots a_k}}W_{kN}(\pi,\sigma)x_{a_1}^{e_1}\ldots x_{a_k}^{e_k}\\
P^I_{i_1\ldots i_k}&=&\frac{1}{\sqrt{|I|^k}}\sum_{b_1\ldots b_k\in I}\sum_{\pi,\sigma\in D}(\xi_\pi)_{i_1\ldots i_k}\overline{(\xi_\sigma)_{b_1\ldots b_k}}W_{kN}(\pi,\sigma)
\end{eqnarray*}

We can see that the defining relations for $X_{G,I}\subset S^{N-1}_{\mathbb C,+}$ imply $Px^{\otimes k}=P^I$, and so imply the relations defining $X_{G,I}^{max}\subset S^{N-1}_{\mathbb C,+}$. Thus, we have an inclusion $X_{G,I}\subset X_{G,I}^{max}$.

Let us prove now that we have $X_{G,I}^{min}\subset X_{G,I}$. We must check here that the variables $X_i=\frac{1}{\sqrt{|I|}}\sum_{b\in I}u_{ib}\in C(X_{G,I}^{min})$ satisfy the relations defining $X_{G,I}$, and we have indeed:
\begin{eqnarray*}
\sum_{a_1\ldots a_k}\xi_{a_1\ldots a_k}X_{a_1}^{e_1}\ldots X_{a_k}^{e_k}
&=&\frac{1}{\sqrt{|I|^k}}\sum_{a_1\ldots a_k}\xi_{a_1\ldots a_k}\sum_{b_1\ldots b_k\in I}u_{a_1b_1}^{e_1}\ldots u_{a_kb_k}^{e_k}\\
&=&\frac{1}{\sqrt{|I|^k}}\sum_{b_1\ldots b_k\in I}\xi_{b_1\ldots b_k}
\end{eqnarray*}

Finally, in order to construct an action $G\curvearrowright X_{G,I}$, we must show that the variables $Z_a=\sum_iu_{ai}\otimes x_i$ satisfy the defining relations for $X_{G,I}$. We have:
\begin{eqnarray*}
\sum_{a_1\ldots a_k}\xi_{a_1\ldots a_k}Z_{a_1}^{e_1}\ldots Z_{a_k}^{e_k}
&=&\sum_{a_1\ldots a_k}\sum_{i_1\ldots i_k}\xi_{a_1\ldots a_k}u_{a_1i_1}^{e_1}\ldots u_{a_ki_k}^{e_k}\otimes x_{i_1}^{e_1}\ldots x_{i_k}^{e_k}\\
&=&\sum_{i_1\ldots i_k}\xi_{i_1\ldots i_k}\otimes x_{i_1}^{e_1}\ldots x_{i_k}^{e_k}=1\otimes\frac{1}{\sqrt{|I|^k}}\sum_{b_1\ldots b_k\in I}\xi_{b_1\ldots b_k}
\end{eqnarray*}

Thus we have an action $G\curvearrowright X_{G,I}$, and this finishes the proof.
\end{proof}

\section{Basic examples}

We discuss now some basic examples of affine homogeneous spaces, namely those coming from the classical groups, and those coming from the group duals. We will need: 

\begin{proposition}
Assuming that a closed subset $X\subset S^{N-1}_{\mathbb C,+}$ is affine homogeneous over a classical group, $G\subset U_N$, then $X$ itself must be classical, $X\subset S^{N-1}_\mathbb C$.
\end{proposition}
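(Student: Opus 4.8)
The plan is to reduce the question to a single universal algebra and then to exploit the fact that classicality of $G$ is detected, at the level of the Tannakian data, by the presence of the flip among the intertwiners. Since $C(G)$ is commutative, the coordinates $\alpha(x_i)=\frac{1}{\sqrt{|I|}}\sum_{b\in I}u_{ib}\in C(G)$ pairwise commute, so $\alpha$ annihilates every commutator. As $X\subset X_{G,I}^{max}$, the algebra $C(X)$ is a quotient of $C(X_{G,I}^{max})$, and a quotient of a commutative algebra is commutative; hence it suffices to prove that $C(X_{G,I}^{max})$ is commutative. Moreover, for $G\subset U_N$ we have $Fix(u_{U_N}^{\otimes k})\subset Fix(u_G^{\otimes k})$ for all $k$, so if $P_{U_N},P_G$ denote the corresponding projections then $P_{U_N}P_G=P_{U_N}$, and the relation $P_Gx^{\otimes k}=P_G^I$ defining $X_{G,I}^{max}$ implies the one defining $X_{U_N,I}^{max}$. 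Thus $C(X_{G,I}^{max})$ is itself a quotient of $C(X_{U_N,I}^{max})$, and I may even assume $G=U_N$, where the fixed spaces are explicitly spanned by the pairing vectors.

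First I would record the key categorical input: commutativity of $C(G)$ is equivalent to the flip $\Sigma$, acting on two adjacent tensor legs, being an intertwiner, since $\Sigma(u^{e}\otimes u^{f})=(u^{f}\otimes u^{e})\Sigma$ amounts precisely to $u^e_{jk}u^f_{il}=u^f_{il}u^e_{jk}$. Consequently, for any coloring $k$ and any $\xi\in Fix(u^{\otimes k})$, applying the flip at positions $j,j+1$ produces a vector $\Sigma\xi\in Fix(u^{\otimes k'})$, where $k'$ is $k$ with the colors $e_j,e_{j+1}$ interchanged.

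Next I would feed both $\xi$ and $\Sigma\xi$ into the defining relations of Theorem 4.3. Writing the two relations, relabelling the swapped summation indices, and observing that the right-hand sides agree (the sum over $b\in I$ is symmetric under this relabelling), the subtraction of the two identities yields, for every $\xi\in Fix(u^{\otimes k})$ and every adjacent pair $j$,
$$\sum_{a_1\ldots a_k}\xi_{a_1\ldots a_k}\,x^{e_1}_{a_1}\cdots x^{e_{j-1}}_{a_{j-1}}\big[x^{e_j}_{a_j},x^{e_{j+1}}_{a_{j+1}}\big]\,x^{e_{j+2}}_{a_{j+2}}\cdots x^{e_k}_{a_k}=0.$$
These are genuine relations holding in the universal algebra $C(X_{G,I}^{max})$, not merely in some completion.

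The main obstacle is to upgrade these ``contracted'' identities to the honest commutation relations $[x^{e}_{a},x^{f}_{b}]=0$. For $G=U_N$ I would argue by induction on the length $k$: the fixed spaces are spanned by the pairing vectors $\xi_\pi$, and by choosing $k$ and $\xi=\xi_\pi$ appropriately one can isolate a single commutator, the surrounding coordinates being reabsorbed via the sphere relations $\sum_ix_ix_i^*=\sum_ix_i^*x_i=1$ and the lower-order relations already established; this bookkeeping is the delicate part. As an independent check valid for any classical $G$, I note that the Haar functional is faithful on $C(G)$, so for any commutator $c$ one has $\int_X(c^*c)=\int\alpha(c)^*\alpha(c)=0$, which shows the commutators vanish in the GNS completion and thus confirms that $X$ is classical at the level of its canonical trace, reconciling the two viewpoints.
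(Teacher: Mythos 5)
Your reductions are all correct: $C(X)$ is a quotient of $C(X_{G,I}^{max})$, the latter is a quotient of $C(X_{U_N,I}^{max})$ (indeed $Fix(u_{U_N}^{\otimes k})\subset Fix(u_G^{\otimes k})$ gives $P_{U_N}P_G=P_{U_N}$, hence $P_Gx^{\otimes k}=P_G^I$ implies $P_{U_N}x^{\otimes k}=P_{U_N}^I$), and the flip-intertwiner observation plus relabelling does yield the contracted identities
$$\sum_{a_1\ldots a_k}\xi_{a_1\ldots a_k}\,x_{a_1}^{e_1}\cdots\left[x_{a_j}^{e_j},x_{a_{j+1}}^{e_{j+1}}\right]\cdots x_{a_k}^{e_k}=0$$
in the universal algebra. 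But the argument stops exactly where the real work begins: nothing is offered for the passage from these contracted sums to the individual relations $[x_a^e,x_b^f]=0$, beyond the announcement of an induction whose ``bookkeeping is the delicate part''. This is the whole difficulty: the coordinates are not invertible, the $\delta_\pi$-weights never separate a single pair of indices, and reabsorbing the surrounding factors through $\sum_ix_ix_i^*=\sum_ix_i^*x_i=1$ reproduces contracted sums of the same type, so it is far from clear that any purely algebraic induction closes. Your fallback argument does not close the gap either: faithfulness of the Haar state on $C(G)$ only gives $\int_X c^*c=\int\alpha(c)^*\alpha(c)=0$ for a commutator $c$, i.e. $c=0$ in the GNS quotient $C(X')$ of Theorem 2.4 --- but $\alpha$ and $\int_X$ need not be faithful on $C(X)$ (this is precisely why the ``extended'' formalism of Sections 2--3 exists), so this proves strictly less than the statement.

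The missing idea is a positivity (sum-of-squares) argument, which is how the paper proceeds, using a single fixed vector and no induction. Since $G$ is classical, $\xi=\sum_{ij}e_i\otimes e_j\otimes e_i\otimes e_j$ is fixed by $u^{\circ\circ\bullet\bullet}$, and the corresponding relation --- which holds in $C(X)$ by ergodicity (Proposition 3.2), $\xi$ being real --- reads $\sum_{ij}x_ix_jx_i^*x_j^*=1$. Then one expands, using this together with the sphere relations,
$$\sum_{ij}(x_ix_j-x_jx_i)(x_ix_j-x_jx_i)^*=1-1-1+1=0$$
and since a vanishing sum of positive elements in a $C^*$-algebra forces every term to vanish, $[x_i,x_j]=0$ for all $i,j$; a second computation of the same kind, now also using these relations, gives $[x_i,x_j^*]=0$. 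Note that this works directly for every classical $G$ and every affine homogeneous $X$, so your (correct) reduction to $G=U_N$ is not even needed; what cannot be dispensed with is the $C^*$-positivity, which is the one ingredient your proposal never invokes at the level of $C(X)$ itself.
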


\begin{proof}
We use the well-known fact that, since the standard coordinates $u_{ij}\in C(G)$ commute, the corepresentation $u^{\circ\circ\bullet\bullet}=u^{\otimes 2}\otimes\bar{u}^{\otimes 2}$ has the following fixed vector:
$$\xi=\sum_{ij}e_i\otimes e_j\otimes e_i\otimes e_j$$

With $k=\circ\circ\bullet\,\bullet$ and with this vector $\xi$, the formula in Theorem 4.3 reads:
$$\sum_{ij}x_ix_jx_i^*x_j^*=\frac{1}{\sqrt{|I|^4}}\sum_{i,j\in I}1=1$$

By using this formula, along with $\sum_ix_ix_i^*=\sum_ix_i^*x_i=1$, we obtain:
\begin{eqnarray*}
\sum_{ij}(x_ix_j-x_jx_i)(x_j^*x_i^*-x_i^*x_j^*)
&=&\sum_{ij}x_ix_jx_j^*x_i^*-x_ix_jx_i^*x_j^*-x_jx_ix_j^*x_i^*+x_jx_ix_i^*x_j^*\\
&=&1-1-1+1=0
\end{eqnarray*}

We conclude that we have $[x_i,x_j]=0$, for any $i,j$. By using now this commutation relation, plus once again the relations defining $S^{N-1}_{\mathbb C,+}$, we have as well:
\begin{eqnarray*}
\sum_{ij}(x_ix_j^*-x_j^*x_i)(x_jx_i^*-x_i^*x_j)
&=&\sum_{ij}x_ix_j^*x_jx_i^*-x_ix_j^*x_i^*x_j-x_j^*x_ix_jx_i^*+x_j^*x_ix_i^*x_j\\
&=&\sum_{ij}x_ix_j^*x_jx_i^*-x_ix_i^*x_j^*x_j-x_j^*x_jx_ix_i^*+x_j^*x_ix_i^*x_j\\
&=&1-1-1+1=0
\end{eqnarray*}

Thus we have $[x_i,x_j^*]=0$ as well, and so $X\subset S^{N-1}_\mathbb C$, as claimed. 
\end{proof}

We can now formulate the result in the classical case, as follows:

\begin{proposition}
In the classical case, $G\subset U_N$, there is only one affine homogeneous space, for each index set $I=\{1,\ldots,N\}$, namely the quotient space 
$$X=G/(G\cap C_N^I)$$
where $C_N^I\subset U_N$ is the group of unitaries fixing the vector $\xi_I=\frac{1}{\sqrt{|I|}}(\delta_{i\in I})_i$.
\end{proposition}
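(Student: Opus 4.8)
The plan is to reduce everything to Theorem 3.3, which squeezes any affine homogeneous space over $G$ as $X_{G,I}^{min}\subset X\subset X_{G,I}^{max}$. Thus it suffices to show that in the classical case the two extreme spaces coincide, and to identify the common value with $G/(G\cap C_N^I)$. I would organize the proof as: (A) compute $X_{G,I}^{min}$ as an orbit; (B) describe $X_{G,I}^{max}$ as a set of points, using Proposition 5.1; (C) reinterpret the defining relations as invariance statements; (D) conclude by separation of orbits.

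First I would compute $X_{G,I}^{min}$. In the classical case the coordinate $X_i=\frac{1}{\sqrt{|I|}}\sum_{b\in I}u_{ib}$ is the function $g\mapsto(g\xi_I)_i$, so the evaluation map $G\to S^{N-1}_{\mathbb C}$ attached to $\alpha$ is $g\mapsto g\xi_I$. Since $G$ is compact its image is the closed orbit $G\xi_I$, whence $X_{G,I}^{min}=G\xi_I=G/\mathrm{Stab}_G(\xi_I)$; and $\mathrm{Stab}_G(\xi_I)=G\cap C_N^I$ by the very definition of $C_N^I$. Next, by Proposition 5.1 the space $X_{G,I}^{max}$ is classical, so $C(X_{G,I}^{max})$ is commutative and $X_{G,I}^{max}$ is just the set of points $x\in S^{N-1}_{\mathbb C}$ satisfying the relations $Px^{\otimes k}=P^I$.

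The crucial step is to reinterpret these relations. Using the standard fact that $P=\int u^{\otimes k}$ is the orthogonal projection onto $Fix(u^{\otimes k})$ (see \cite{wo1}), and that $P^I=P\xi_I^{\otimes k}$, the relations read $P(x^{\otimes k}-\xi_I^{\otimes k})=0$ for all colored $k$, i.e. $\langle\eta,x^{\otimes k}\rangle=\langle\eta,\xi_I^{\otimes k}\rangle$ for every colored $k$ and every $\eta\in Fix(u^{\otimes k})$. Now for an arbitrary $c\in(\mathbb C^N)^{\otimes k}$ one has $\langle Pc,y^{\otimes k}\rangle=\int_G\langle c,(gy)^{\otimes k}\rangle\,dg$, the $G$-average of the polynomial $y\mapsto\langle c,y^{\otimes k}\rangle$; and since the fixed vectors are exactly the vectors $Pc$, a given invariant polynomial $q=\sum_k q_k$ decomposes according to colored bidegree, with each homogeneous part writing as $q_k=\langle\eta_k,\cdot^{\otimes k}\rangle$ for a fixed vector $\eta_k$. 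Hence the relations say precisely that every $G$-invariant polynomial on $S^{N-1}_{\mathbb C}$ takes the same value at $x$ and at $\xi_I$.

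Finally I would invoke compactness of $G$: invariant polynomials in $x_i,\bar x_i$ are uniformly dense in the invariant continuous functions on $S^{N-1}_{\mathbb C}$ (approximate by Stone--Weierstrass, then average over $G$), and invariant continuous functions separate the orbits of the compact group $G$ (separate the disjoint compact orbits by Urysohn, then average). Therefore $x$ and $\xi_I$, agreeing on all invariant polynomials, must lie in the same orbit, so $x\in G\xi_I$. This yields $X_{G,I}^{max}=G\xi_I=X_{G,I}^{min}$, and combined with the squeeze from Theorem 3.3 it follows that $X=G/(G\cap C_N^I)$ is the unique affine homogeneous space attached to $I$. I expect the main obstacle to be the reinterpretation step in paragraph three together with the orbit-separation argument: the real content is the classical fact that invariants of a compact group action separate orbits, and the bookkeeping needed to match the colored fixed-vector relations with genuine invariant polynomials.
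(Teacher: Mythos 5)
Your proof is correct, but it is organized quite differently from the paper's. The paper argues directly on an arbitrary affine homogeneous space $X$: since $(\int\otimes id)\Phi$ is the projection onto the fixed-point algebra $C(X)^\Phi$, the ergodicity axiom forces $C(X)^\Phi=\mathbb C1$; after Proposition 5.1 reduces to the classical situation, this says that continuous functions constant on the orbits of $G\curvearrowright X$ are constant, and since orbits of a compact group action are closed, ergodicity forces transitivity. Hence $X=X_{G,I}^{min}$, which is then identified with $G/(G\cap C_N^I)$ by noting that the image of $1\in G$ under the quotient map is exactly $\xi_I$. You instead prove the collapse $X_{G,I}^{min}=X_{G,I}^{max}$ and then invoke the squeeze of Theorem 3.3: you translate the defining relations of $X_{G,I}^{max}$ --- using that $P$ is the orthogonal projection onto $Fix(u^{\otimes k})$ and that $P^I=P\xi_I^{\otimes k}$ --- into the statement that $x$ agrees with $\xi_I$ on every $G$-invariant polynomial, and then separate orbits by invariant functions (Urysohn plus Haar averaging, with a Stone--Weierstrass step to pass from invariant polynomials to invariant continuous functions). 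Both arguments ultimately rest on the same classical fact, namely that invariant continuous functions of a compact group action separate the closed orbits, and both need Proposition 5.1; the difference is in the decomposition. The paper trades your Weingarten-projection bookkeeping and the density step for the abstract characterization of ergodicity via the fixed-point expectation, which makes its proof shorter and applicable to each intermediate $X$ without ever mentioning the explicit relations. Your version buys an explicit invariant-theoretic description of $X_{G,I}^{max}$ and yields directly the statement, recorded later as part of Theorem 5.5, that both canonical maps are isomorphisms in the classical case; all steps you flag as delicate (the reinterpretation of $P^I$, the bidegree decomposition of invariant polynomials, the orbit separation) do go through as you describe.
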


\begin{proof}
Consider an affine homogeneous space $G\to X$. We already know from Proposition 5.1 above that $X$ is classical. We will first prove that we have $X=X_{G,I}^{min}$, and then we will prove that $X_{G,I}^{min}$ equals the quotient space in the statement.

(1) We use the well-known fact that the functional $E=(\int\otimes id)\Phi$ is the projection onto the fixed point algebra $C(X)^\Phi=\{f\in C(X)|\Phi(f)=1\otimes f\}$. Thus our ergodicity condition, namely $E=\int\alpha(.)1$, shows that we must have $C(X)^\Phi=\mathbb C1$. Now since in the classical case the condition $\Phi(f)=1\otimes f$ reads $f(gx)=f(x)$ for any $g\in G$ and $x\in X$, we recover in this way the usual ergodicity condition, stating that whenever a function $f\in C(X)$ is constant on the orbits of the action, it must be constant. 

Now observe that for an affine action, the orbits are closed. Thus an affine action which is ergodic must be transitive, and we deduce from this that we have $X=X_{G,I}^{min}$. 

(2) We know that the inclusion $C(X)\subset C(G)$ comes via $x_i=\frac{1}{\sqrt{|I|}}\sum_{j\in I}u_{ij}$. Thus, the quotient map $p:G\to X\subset S^{N-1}_\mathbb C$ is given by the following formula:
$$p(g)=\left(\frac{1}{\sqrt{|I|}}\sum_{j\in I}g_{ij}\right)_i$$

In particular, the image of the unit matrix $1\in G$ is the following vector:
$$p(1)=\left(\frac{1}{\sqrt{|I|}}\sum_{j\in I}\delta_{ij}\right)_i=\left(\frac{1}{\sqrt{|I|}}\delta_{i\in I}\right)_i=\xi_I$$

But this gives the result, and we are done.
\end{proof}

Let us discuss now the group dual case. Given a discrete group $\Gamma=<g_1,\ldots,g_N>$, we can consider the embedding $\widehat{\Gamma}\subset U_N^+$ given by $u_{ij}=\delta_{ij}g_i$. We have then:

\begin{proposition}
In the group dual case, $G=\widehat{\Gamma}$ with $\Gamma=<g_1,\ldots,g_N>$, we have
$$X=\widehat{\Gamma}_I\quad,\quad\Gamma_I=<g_i|i\in I>\subset\Gamma$$
for any affine homogeneous space $X$, when identifying full and reduced group algebras.
\end{proposition}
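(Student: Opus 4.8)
The plan is to show that in the group dual case both extremal spaces of Theorem 3.3 collapse onto the minimal one, so that the sandwich $X_{G,I}^{min}\subset X\subset X_{G,I}^{max}$ pins down $X$ completely. The starting observation is that the embedding $\widehat{\Gamma}\subset U_N^+$ is diagonal, $u_{ij}=\delta_{ij}g_i$, hence each corepresentation $u^{\otimes k}$ is a diagonal matrix whose entries are the words $g_{i_1}^{e_1}\cdots g_{i_k}^{e_k}$, with $g^\circ=g$ and $g^\bullet=g^{-1}$. Since the group elements are linearly independent in $C^*(\Gamma)$, a coordinate vector $e_{i_1}\otimes\cdots\otimes e_{i_k}$ is fixed by $u^{\otimes k}$ exactly when $g_{i_1}^{e_1}\cdots g_{i_k}^{e_k}=1$, and these coordinate vectors span $Fix(u^{\otimes k})$. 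First I would record this computation of the fixed spaces.

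Next I would feed these fixed vectors into Theorem 4.3. Because $Fix(u^{\otimes k})$ has a basis of coordinate vectors, the defining relations of $X_{G,I}$ become, for every word with $g_{i_1}^{e_1}\cdots g_{i_k}^{e_k}=1$, the single monomial relation $x_{i_1}^{e_1}\cdots x_{i_k}^{e_k}=|I|^{-k/2}$ when all indices $i_m$ lie in $I$, and $=0$ otherwise; the same diagonal computation of $P$ and $P^I$ shows that these coincide with the relations cutting out $X_{G,I}^{max}$, so $X_{G,I}=X_{G,I}^{max}$ here. Specializing to $k=\circ\bullet$ and $k=\bullet\circ$ (where $g_ig_i^{-1}=1$ is automatic) gives $x_ix_i^*=x_i^*x_i=|I|^{-1}\delta_{i\in I}$; thus $x_i=0$ for $i\notin I$, while $u_i:=\sqrt{|I|}\,x_i$ is a unitary for $i\in I$. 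Reading the remaining relations through $x_i=|I|^{-1/2}u_i$, I get $u_{i_1}^{e_1}\cdots u_{i_k}^{e_k}=1$ precisely when $g_{i_1}^{e_1}\cdots g_{i_k}^{e_k}=1$ in $\Gamma_I$, i.e. the $u_i$ satisfy exactly the defining relations of $\Gamma_I=\langle g_i\,|\,i\in I\rangle$.

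The last step is the identification. The universal property of the full group algebra yields a surjection $C^*(\Gamma_I)\to C(X_{G,I}^{max})$ sending $g_i\mapsto u_i$, while the inclusion $X_{G,I}^{min}\subset X_{G,I}^{max}$ gives a surjection $C(X_{G,I}^{max})\to C(X_{G,I}^{min})$; since $C(X_{G,I}^{min})$ is the sub-$C^*$-algebra of $C(\widehat{\Gamma})$ generated by the $g_i$ with $i\in I$, namely $C^*_{red}(\Gamma_I)$, the composite is the canonical map $C^*(\Gamma_I)\to C^*_{red}(\Gamma_I)$. Identifying full and reduced group algebras makes this composite an isomorphism, which forces both surjections to be isomorphisms; hence $X_{G,I}^{min}=X_{G,I}^{max}=\widehat{\Gamma_I}$, and the sandwich from Theorem 3.3 gives $X=\widehat{\Gamma_I}$ for every affine homogeneous space.

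I expect the main obstacle to be bookkeeping rather than conceptual: one must verify cleanly that no relations beyond those of $\Gamma_I$ survive, so that the surjection $C^*(\Gamma_I)\to C(X_{G,I}^{max})$ has trivial kernel once full and reduced are identified, and one must keep track of whether $C(\widehat{\Gamma})$ is taken in its full or reduced form — which is exactly the point the hypothesis ``identifying full and reduced group algebras'' is there to absorb.
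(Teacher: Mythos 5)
Your proof is correct, but it follows a genuinely different route from the paper's. The paper argues directly on an arbitrary affine homogeneous space $G\to X$: with the rescaled coordinates $h_i=\sqrt{|I|}\,x_i$, the axioms become $\alpha(h_i)=\delta_{i\in I}g_i$ and $\Phi(h_i)=g_i\otimes h_i$, the ergodicity condition translates into the implication $g_{i_1}^{e_1}\cdots g_{i_p}^{e_p}=1\implies h_{i_1}^{e_1}\cdots h_{i_p}^{e_p}=\delta_{i_1\in I}\cdots\delta_{i_p\in I}$, which kills the $h_i$ with $i\notin I$ and makes the remaining ones unitaries, and the converse implication (for indices in $I$) comes from applying the morphism $\alpha$; hence $X=\widehat{\Lambda}$ with $\Lambda\simeq\Gamma_I$. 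You instead pin down the two extremes of the sandwich of Theorem 3.3 and squeeze: after computing $Fix(u^{\otimes k})$ (correctly) as the span of the coordinate vectors indexed by trivial words, Theorem 4.3 identifies $C(X_{G,I}^{max})$ with the universal, i.e. full, group algebra $C^*(\Gamma_I)$, while $C(X_{G,I}^{min})$ is the copy of the group algebra of $\Gamma_I$ sitting inside $C(\widehat{\Gamma})$, and the composite surjection $C^*(\Gamma_I)\to C(X_{G,I}^{max})\to C(X_{G,I}^{min})$ is the canonical one, an isomorphism once full and reduced algebras are identified, so everything in between collapses. The underlying mechanism is the same in both arguments --- ergodicity produces the relations of $\Gamma_I$, and the existence of $\alpha$, which you package as the surjection onto $C(X_{G,I}^{min})$, excludes any further relations --- but your version runs on heavier machinery, and in exchange yields a sharper structural fact: for group duals the gap between $X_{G,I}^{min}$ and $X_{G,I}^{max}$ is exactly the gap between the reduced and full completions of the group algebra of $\Gamma_I$, which is pertinent to the amenability conjecture formulated after Theorem 5.5. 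Two harmless imprecisions to flag: the phrase ``$u_{i_1}^{e_1}\cdots u_{i_k}^{e_k}=1$ precisely when $g_{i_1}^{e_1}\cdots g_{i_k}^{e_k}=1$'' overstates what the relations alone give (only the ``if'' direction holds a priori; the ``only if'' is exactly what your injectivity argument in the last step supplies), and calling $C(X_{G,I}^{min})$ the reduced algebra $C^*_{red}(\Gamma_I)$ presumes that $C(\widehat{\Gamma})$ is taken in its reduced form --- both points being absorbed, as you yourself note, by the hypothesis of identifying full and reduced group algebras.
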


\begin{proof}
Assume indeed that we have an affine homogeneous space $G\to X$, as in Definition 3.1. In terms of the rescaled coordinates $h_i=\sqrt{|I|}x_i$, our axioms for $\alpha,\Phi$ read:
$$\alpha(h_i)=\delta_{i\in I}g_i\quad,\quad\Phi(h_i)=g_i\otimes h_i$$

As for the ergodicity condition, this translates as follows:
\begin{eqnarray*}
&&\left(\int\otimes id\right)\Phi(h_{i_1}^{e_1}\ldots h_{i_p}^{e_p})=\int\alpha(h_{i_1}^{e_p}\ldots h_{i_p}^{e_p})\\
&\iff&\left(\int\otimes id\right)(g_{i_1}^{e_1}\ldots g_{i_p}^{e_p}\otimes h_{i_1}^{e_1}\ldots h_{i_p}^{e_p})=\int_G\delta_{i_1\in I}\ldots\delta_{i_p\in I}g_{i_1}^{e_1}\ldots g_{i_p}^{e_p}\\
&\iff&\delta_{g_{i_1}^{e_1}\ldots g_{i_p}^{e_p},1}h_{i_1}^{e_1}\ldots h_{i_p}^{e_p}=\delta_{g_{i_1}^{e_1}\ldots g_{i_p}^{e_p},1}\delta_{i_1\in I}\ldots\delta_{i_p\in I}\\
&\iff&\left[g_{i_1}^{e_1}\ldots g_{i_p}^{e_p}=1\implies h_{i_1}^{e_1}\ldots h_{i_p}^{e_p}=\delta_{i_1\in I}\ldots\delta_{i_p\in I}\right]
\end{eqnarray*}

Now observe that from $g_ig_i^*=g_i^*g_i=1$ we obtain in this way $h_ih_i^*=h_i^*h_i=\delta_{i\in I}$. Thus the elements $h_i$ vanish for $i\notin I$, and are unitaries for $i\in I$. We conclude that we have $X=\widehat{\Lambda}$, where $\Lambda=<h_i|i\in I>$ is the group generated by these unitaries.

In order to finish the proof, our claim is that for indices $i_x\in I$ we have:
$$g_{i_1}^{e_1}\ldots g_{i_p}^{e_p}=1\iff h_{i_1}^{e_1}\ldots h_{i_p}^{e_p}=1$$

Indeed, $\implies$ comes from the ergodicity condition, as processed above, and $\Longleftarrow$ comes from the existence of the morphism $\alpha$, which is given by $\alpha(h_i)=g_i$, for $i\in I$.
\end{proof}

Let us go back now to the general case, and discuss a number of further axiomatization issues, based on the examples that we have. We will need:

\begin{proposition}
The closed subspace $C_N^{I+}\subset U_N^+$ defined via
$$C(C_N^{I+})=C(U_N^+)\Big/\left<u\xi_I=\xi_I\right>$$
where $\xi_I=\frac{1}{\sqrt{|I|}}(\delta_{i\in I})_i$, is a compact quantum group.
\end{proposition}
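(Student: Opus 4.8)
The plan is to show that the structural maps $\Delta,\varepsilon,S$ of $C(U_N^+)$ descend to the quotient $C(C_N^{I+})$, which by Definition 1.1 is precisely what is needed for $C_N^{I+}$ to be a compact quantum group. Write $\pi:C(U_N^+)\to C(C_N^{I+})$ for the quotient map, and recall that the defining ideal is generated by the $N$ relations $(u\xi_I)_i=(\xi_I)_i$, i.e. $\frac{1}{\sqrt{|I|}}\sum_{b\in I}u_{ib}=\frac{1}{\sqrt{|I|}}\delta_{i\in I}$. It then suffices to check, for each of $\Delta,\varepsilon,S$, that the image of the generator $u\xi_I-\xi_I$ lands in the appropriate ideal.

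For the comultiplication, I would use that $\Delta$ acts on the fundamental matrix by $\Delta(u)=u^{(1)}u^{(2)}$ in $M_N(C(U_N^+)\otimes C(U_N^+))$, where $u^{(1)}_{ij}=u_{ij}\otimes1$ and $u^{(2)}_{ij}=1\otimes u_{ij}$. A short computation gives $\Delta\bigl((u\xi_I)_i\bigr)=\sum_k u_{ik}\otimes(u\xi_I)_k$; applying the relation first in the second tensor factor, $u^{(2)}\xi_I=\xi_I$, and then in the first, $u^{(1)}\xi_I=\xi_I$, collapses $(\pi\otimes\pi)\Delta(u\xi_I)$ to $\xi_I$. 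Thus $\Delta$ preserves the relation and descends to a morphism $C(C_N^{I+})\to C(C_N^{I+})\otimes C(C_N^{I+})$. The counit is immediate, since $\varepsilon(u_{ij})=\delta_{ij}$ already gives $\varepsilon\bigl((u\xi_I)_i\bigr)=(\xi_I)_i$.

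The antipode is the step I expect to be the only genuine obstacle, since $S$ is an anti-morphism with $S(u_{ij})=u_{ji}^*$, so it does not obviously respect a relation phrased in terms of $u$. Here I would argue as follows. Because $\xi_I$ is a real vector, taking adjoints in $(u\xi_I)_i=(\xi_I)_i$ shows that $\bar u\,\xi_I=\xi_I$ holds automatically in the quotient, where $\bar u=(u_{ij}^*)$. A direct computation then yields $S(u\xi_I)=\bar u^t\xi_I=u^*\xi_I$, using $\bar u^t=u^*$. The key point is that $u^*\xi_I=\xi_I$ is a consequence of $u\xi_I=\xi_I$ together with unitarity $u^*u=1$: indeed $u^*\xi_I-\xi_I=-u^*(u\xi_I-\xi_I)$ lies in the defining ideal. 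Hence $S$ sends the generator $u\xi_I-\xi_I$ back into the ideal, and descends to an anti-morphism $C(C_N^{I+})\to C(C_N^{I+})^{opp}$.

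Finally, since $\Delta,\varepsilon,S$ all descend and the matrix $u$ remains unitary in the quotient, the axioms of Definition 1.1 are satisfied by $(C(C_N^{I+}),u)$, so $C_N^{I+}$ is a compact quantum group. Alternatively one could phrase the whole argument Tannakian-style, observing that imposing $\xi_I\in Fix(u)$ cuts out a quantum subgroup of $U_N^+$; but the hands-on verification above, with the unitarity trick handling the antipode, is the most economical route.
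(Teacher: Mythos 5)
Your proof is correct and follows essentially the same route as the paper: check that $\Delta,\varepsilon,S$ descend to the quotient, with the comultiplication handled by applying the relation in each tensor leg and the antipode handled via $u^*\xi_I=u^*u\xi_I=\xi_I$ (the paper states this last step as ``clear from $u\xi_I=\xi_I$''; your unitarity identity $u^*\xi_I-\xi_I=-u^*(u\xi_I-\xi_I)$ is exactly the justification). The remark about $\bar u\xi_I=\xi_I$ coming from self-adjointness of the ideal is harmless but unused in your final argument.
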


\begin{proof}
We must check Woronowicz's axioms, and the proof goes as follows:

(1) Let us set $U_{ij}=\sum_ku_{ik}\otimes u_{kj}$. We have then:
\begin{eqnarray*}
(U\xi_I)_i
&=&\frac{1}{\sqrt{|I|}}\sum_{j\in I}U_{ij}
=\frac{1}{\sqrt{|I|}}\sum_{j\in I}\sum_ku_{ik}\otimes u_{kj}
=\sum_ku_{ik}\otimes(u\xi_I)_k\\
&=&\sum_ku_{ik}\otimes(\xi_I)_k
=\frac{1}{\sqrt{|I|}}\sum_{k\in I}u_{ik}\otimes1=(u\xi_I)_i\otimes1=(\xi_I)_i\otimes1
\end{eqnarray*}

Thus we can define indeed a comultiplication map, by $\Delta(u_{ij})=U_{ij}$.

(2) In order to construct the counit map, $\varepsilon(u_{ij})=\delta_{ij}$, we must prove that the identity matrix $1=(\delta_{ij})_{ij}$ satisfies $1\xi_I=\xi_I$. But this is clear.

(3) In order to construct the antipode, $S(u_{ij})=u_{ji}^*$, we must prove that the adjoint matrix $u^*=(u_{ji}^*)_{ij}$ satisfies $u^*\xi_I=\xi_I$. But this is clear from $u\xi_I=\xi_I$.
\end{proof}

Based on the computations that we have so far, we can formulate:

\begin{theorem}
Given a closed quantum subgroup $G\subset U_N^+$ and a set $I\subset\{1,\ldots,N\}$, we have a quotient map and an inclusion map as follows:
$$G/(G\cap C_N^{I+})\to X_{G,I}^{min}\subset X_{G,I}^{max}$$
These maps are both isomorphisms in the classical case. In general, they are both proper.
\end{theorem}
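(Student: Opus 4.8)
The plan is to handle the two maps separately: I would build the quotient map $G/(G\cap C_N^{I+})\to X_{G,I}^{min}$ by hand, read off the inclusion $X_{G,I}^{min}\subset X_{G,I}^{max}$ from Theorem 3.3, and then locate the classical isomorphisms and the general properness inside the two model cases (classical groups and group duals) already computed in this section.

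First I would construct the quotient map, which amounts to checking that the generators $X_i=\frac{1}{\sqrt{|I|}}\sum_{b\in I}u_{ib}$ of $C(X_{G,I}^{min})$ lie in the fixed-point algebra $C(G/(G\cap C_N^{I+}))=\{f\in C(G)\,|\,(id\otimes\rho)\Delta f=f\otimes1\}$ of Proposition 1.2, where $\rho:C(G)\to C(G\cap C_N^{I+})$ is the quotient map. Since $\Delta(X_i)=\sum_k u_{ik}\otimes X_k$, the only point to verify is $\rho(X_k)=(\xi_I)_k=\frac{1}{\sqrt{|I|}}\delta_{k\in I}$, and this is exactly the defining relation $u\xi_I=\xi_I$ of $C_N^{I+}$ from Proposition 5.4, pushed forward along $\rho$. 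Feeding this back gives $(id\otimes\rho)\Delta(X_i)=\frac{1}{\sqrt{|I|}}\sum_{k\in I}u_{ik}\otimes1=X_i\otimes1$, so that $C(X_{G,I}^{min})\subseteq C(G/(G\cap C_N^{I+}))$ inside $C(G)$, which is the asserted quotient map. The inclusion $X_{G,I}^{min}\subset X_{G,I}^{max}$ is already supplied by Theorem 3.3. For the classical case I would observe that $C(G\cap C_N^{I+})$, being a commutative quotient of $C(G)$, coincides with $C(G\cap C_N^I)$, so $G/(G\cap C_N^{I+})=G/(G\cap C_N^I)$; Proposition 5.2 identifies this quotient with $X_{G,I}^{min}$ and asserts uniqueness of the affine homogeneous space, and since $X_{G,I}^{max}$ is itself affine homogeneous by Theorem 3.3 and lies above $X_{G,I}^{min}$, uniqueness forces $X_{G,I}^{min}=X_{G,I}^{max}$. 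Hence both maps are isomorphisms classically.

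The properness in general I would extract from the group dual case, the crux being that the two maps fail for genuinely different reasons. Working with $G=\widehat{\Gamma}$, $\Gamma=<g_1,\ldots,g_N>$ and $u=diag(g_i)$, the relation $u\xi_I=\xi_I$ forces $g_i=1$ for $i\in I$, so $G\cap C_N^{I+}=\widehat{\Gamma/N_I}$ with $N_I$ the normal closure of $\{g_i\,|\,i\in I\}$; by Proposition 1.3 this yields $G/(G\cap C_N^{I+})=\widehat{N_I}$, whereas Proposition 5.3 gives $X_{G,I}^{min}=\widehat{\Gamma_I}$ with $\Gamma_I=<g_i\,|\,i\in I>$ the plain subgroup. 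Thus the first map is proper exactly when $\Gamma_I$ is not normal, e.g. $\Gamma=\mathbb Z_2*\mathbb Z_2$ with $I=\{1\}$, where $N_I\supsetneq\Gamma_I$. The inclusion $X_{G,I}^{min}\subset X_{G,I}^{max}$, on the other hand, is the full-versus-reduced distinction hidden in Proposition 5.3: $C(X_{G,I}^{max})$ is the universal group algebra of $\Gamma_I$, while $C(X_{G,I}^{min})$ is its reduced image in $C(G)$, so the inclusion is proper whenever $\Gamma_I$ is non-amenable, e.g. a free product of at least three copies of $\mathbb Z_2$ with $I=\{1,\ldots,N\}$. I expect the main obstacle to be precisely the bookkeeping of these two mechanisms, and in particular correctly pinning down $G/(G\cap C_N^{I+})$ as the normal-closure dual $\widehat{N_I}$ rather than $\widehat{\Gamma_I}$, which is exactly what makes the quotient map nontrivial.
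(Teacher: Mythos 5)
Your construction of the quotient map is exactly the paper's: you verify that the generators $X_i=\frac{1}{\sqrt{|I|}}\sum_{b\in I}u_{ib}$ satisfy $(id\otimes\rho)\Delta X_i=X_i\otimes1$ via the relation $\rho(u)\xi_I=\xi_I$, which is the paper's one displayed computation, and the inclusion and the classical case rest on the same appeals to Theorem 3.3 and Proposition 5.2 (your extra remark that commutativity of $C(G)$ gives $G\cap C_N^{I+}=G\cap C_N^I$ is a detail the paper leaves implicit, and is correct, as is your use of the uniqueness in Proposition 5.2 to force $X_{G,I}^{min}=X_{G,I}^{max}$). Where you genuinely add content is the properness claim, which the paper dispatches in one sentence by citing Proposition 5.3 and \cite{bss}. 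Your first example is sound and completion-independent: for $\Gamma=\mathbb Z_2*\mathbb Z_2$, $I=\{1\}$, the relation $u\xi_I=\xi_I$ does force $g_i=1$ for $i\in I$, so $G\cap C_N^{I+}=\widehat{\Gamma/N_I}$ with $N_I$ the normal closure, Proposition 1.3 gives $G/(G\cap C_N^{I+})=\widehat{N_I}$, and $N_I\supsetneq\Gamma_I$ makes the map proper, since a conditional expectation argument shows that a group element outside $\Gamma_I$ cannot lie in the closed span of $\Gamma_I$, in either the full or the reduced completion.

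Your second example, however, has a gap you should not gloss over. The assertion that $X_{G,I}^{min}\subset X_{G,I}^{max}$ is proper whenever $\Gamma_I$ is non-amenable presupposes that $C(G)$ is the \emph{reduced} completion $C^*_{red}(\Gamma)$. If one works with full group algebras, then the canonical map $C^*(\Gamma_I)\to C^*(\Gamma)$ is injective (any unitary representation of a subgroup is contained in the restriction of an induced representation of the ambient group), so $C(X_{G,I}^{min})$, the closure of $\mathbb C[\Gamma_I]$ in $C^*(\Gamma)$, is again the full algebra $C^*(\Gamma_I)=C(X_{G,I}^{max})$, and the inclusion is an isomorphism. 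Worse, the reduced completion of a non-amenable $\Gamma$ is not available under the paper's Definition 1.1, which demands that the counit be a $C^*$-morphism. So your example only works under the ``identification of full and reduced group algebras'' that Proposition 5.3 explicitly invokes, and which is also the content of the paper's closing conjecture tying the isomorphism question to amenability of $\widehat{G}$; you should state this choice of completion explicitly. Alternatively, if you want an example living strictly inside the axioms, take $G=O_N^+$, $I=\{1\}$: by Theorem 3.3 the affine extended homogeneous space $S^{N-1}_{\mathbb R,+}$ sits as $X_{G,I}^{min}\subset S^{N-1}_{\mathbb R,+}\subset X_{G,I}^{max}$, and since $\alpha:C(S^{N-1}_{\mathbb R,+})\to C(O_N^+)$ is not injective (section 2, citing \cite{bgo}), the surjection $C(X_{G,I}^{max})\to C(X_{G,I}^{min})$ cannot be injective either, so the inclusion is proper.
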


\begin{proof}
Consider the quantum group $H=G\cap C_N^{I+}$, which is by definition such that at the level of the corresponding algebras, we have $C(H)=C(G)\Big/\left<u\xi_I=\xi_I\right>$.

In order to construct a quotient map $G/H\to X_{G,I}^{min}$, we must check that the defining relations for $C(G/H)$ hold for the standard generators $x_i\in C(X_{G,I}^{min})$. But if we denote by $\rho:C(G)\to C(H)$ the quotient map, then we have, as desired:
$$(id\otimes\rho)\Delta x_i
=(id\otimes\rho)\left(\frac{1}{\sqrt{|I|}}\sum_{j\in I}\sum_ku_{ik}\otimes u_{kj}\right)
=\sum_ku_{ik}\otimes(\xi_I)_k=x_i\otimes1$$

In the classical case, Proposition 5.2 shows that both the maps in the statement are isomorphisms. For the group duals, however, these maps are not isomorphisms, in general. This follows indeed from Proposition 5.3, and from the general theory in \cite{bss}.
\end{proof}

It is quite unclear when the maps in Theorem 5.5 are both isomorphisms. Our conjecture is that this should happen when the dual of $G\subset U_N^+$ is amenable.

\section{Further examples}

We discuss now a number of further examples of affine homogeneous spaces, namely the quantum groups themselves, and their ``column spaces'' from \cite{bss}. We will need:

\begin{proposition}
Given a compact matrix quantum group $G=(G,u)$, the pair $G^t=(G,u^t)$, where $(u^t)_{ij}=u_{ji}$, is a compact matrix quantum group as well.
\end{proposition}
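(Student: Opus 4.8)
The plan is to verify directly that the pair $(C(G),u^t)$ satisfies Definition 1.1, i.e. that $v:=u^t$ is a unitary whose entries generate $C(G)$ and that the three structural formulae, written now for $v$, define $C^*$-morphisms. Since the entries of $u^t$ are exactly those of $u$ (merely reindexed), they generate the same algebra $C(G)$, so the generation requirement is free. The real content is the unitarity of $u^t$ together with the identification of the three maps, and it is the unitarity step that I expect to be the crux.

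First I would show that $u^t$ is unitary. Writing $v_{ij}=u_{ji}$, one has entrywise $(v^*)_{ij}=u_{ij}^*$, i.e. $v^*=\bar u$ with $\bar u_{ij}=u_{ij}^*$; consequently $(vv^*)_{ij}=\sum_k u_{ki}u_{kj}^*$ and $(v^*v)_{ij}=\sum_k u_{ik}^*u_{jk}$, so the claim $vv^*=v^*v=1$ is precisely the assertion that $\bar u$ is unitary. This is where the standing hypothesis $S^2=\mathrm{id}$ is genuinely used, and I regard it as the main obstacle: the antipode $S(u_{ij})=u_{ji}^*$ is an anti-homomorphism, and $S^2=\mathrm{id}$ forces $S$ to commute with the involution. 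Granting these two properties, I would apply $S$ to the unitarity relation $\sum_k u_{ik}u_{jk}^*=\delta_{ij}$ of $u$ to obtain $\sum_k u_{kj}u_{ki}^*=\delta_{ij}$, which after relabelling is $vv^*=1$, and apply $S$ to $\sum_k u_{ki}^*u_{kj}=\delta_{ij}$ to obtain $v^*v=1$ in the same manner. (Without $S^2=\mathrm{id}$ the conjugate $\bar u$ need only be equivalent to a unitary, and the statement can fail, so the hypothesis is essential.)

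It then remains to identify the three structural maps attached to $v$, which I expect to be routine. For the counit, $\varepsilon^t(v_{ij})=\delta_{ij}$ reads $\varepsilon^t(u_{ji})=\delta_{ij}=\varepsilon(u_{ji})$, so $\varepsilon^t=\varepsilon$; for the antipode, $S^t(v_{ij})=v_{ji}^*=u_{ij}^*=S(u_{ji})$, so $S^t=S$. Both are thus the morphisms already available on $C(G)$. For the comultiplication, the required formula $\Delta^t(v_{ij})=\sum_k v_{ik}\otimes v_{kj}$ becomes $\Delta^t(u_{ji})=\sum_k u_{ki}\otimes u_{jk}$, which I would recognise as $\Sigma\Delta(u_{ji})$, where $\Sigma$ is the tensor flip on $C(G)\otimes C(G)$. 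Hence $\Delta^t=\Sigma\Delta$ is the co-opposite comultiplication; being a composite of the $C^*$-morphisms $\Delta$ and $\Sigma$ it is itself a morphism, and it is coassociative because $\Delta$ is.

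Putting this together, all three formulae of Definition 1.1 define morphisms for $v=u^t$, and the Hopf-algebra axioms for $(C(G),u^t)$ are inherited from those for $(C(G),u)$, with $\Delta$ replaced by its co-opposite. I would conclude that $G^t=(G,u^t)$ is a compact matrix quantum group, the only nontrivial input being the unitarity of $u^t$ secured by $S^2=\mathrm{id}$.
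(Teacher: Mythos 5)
Your proposal is correct and follows essentially the same route as the paper: the comultiplication for $u^t$ is identified as the co-opposite $\Delta^t=\Sigma\Delta$, while the counit and antipode are kept as $(\varepsilon,S)$. The only difference is that you spell out the unitarity of $u^t$ via the antipode and the standing assumption $S^2=\mathrm{id}$, a point the paper's proof leaves implicit in the phrase ``the conditions in Definition 1.1 above are satisfied''; this is a worthwhile addition, not a change of method.
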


\begin{proof}
The construction of the comultiplication is as follows, where $\Sigma$ is the flip map:
$$\Delta^t[(u^t)_{ij}]=\sum_k(u^t)_{ik}\otimes(u^t)_{kj}\iff\Delta^t(u_{ji})=\sum_ku_{ki}\otimes u_{jk}\iff\Delta^t=\Sigma\Delta$$

As for the corresponding counit and antipode, these can be simply taken to be $(\varepsilon,S)$, and the conditions in Definition 1.1 above are satisfied.
\end{proof}

We will need as well the following result, which is standard as well:

\begin{proposition}
Given two closed subgroups $G\subset U_N^+$ and $H\subset U_M^+$, with fundamental corepresentations denoted $u=(u_{ij})$ and $v=(v_{ab})$, their product is a closed subgroup $G\times H\subset U_{NM}^+$, with fundamental corepresentation $w_{ia,jb}=u_{ij}\otimes v_{ab}$. 
\end{proposition}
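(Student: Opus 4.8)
The plan is to reduce everything to the single fact that the $NM\times NM$ matrix $w$, with entries $w_{ia,jb}=u_{ij}\otimes v_{ab}$, is a biunitary over $C(G)\otimes C(H)$; the remaining structure is then forced. Recall that $C(U_{NM}^+)$ is universal among $C^*$-algebras generated by the entries of a biunitary matrix of size $NM$. Hence, once $w$ is shown to be biunitary, there is a unique morphism $C(U_{NM}^+)\to C(G)\otimes C(H)$ sending the standard generators to the elements $w_{ia,jb}$. Its image $C^*\langle w_{ia,jb}\rangle$ is then a quotient of $C(U_{NM}^+)$, i.e.\ a closed quantum subgroup of $U_{NM}^+$, which is by definition $G\times H$, with fundamental corepresentation $w$.

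First I would verify unitarity, which factorizes across the tensor product. Since $u,v$ are unitaries, $(ww^*)_{ia,jb}=\sum_{kc}u_{ik}u_{jk}^*\otimes v_{ac}v_{bc}^*=(uu^*)_{ij}\otimes(vv^*)_{ab}=\delta_{ij}\delta_{ab}$, and symmetrically $w^*w=1$. Next, from $\bar w_{ia,jb}=u_{ij}^*\otimes v_{ab}^*$ one reads off $\bar w=\bar u\otimes\bar v$; as $G\subset U_N^+$ and $H\subset U_M^+$ force $\bar u$ and $\bar v$ to be unitary, the identical computation gives that $\bar w$ is unitary. Thus $w$ is biunitary, and the morphism above exists.

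It then remains to identify the quantum group structure, which I expect to be the tensor product of those of $G$ and $H$: with $\Delta_{G\times H}=(\mathrm{id}\otimes\Sigma\otimes\mathrm{id})(\Delta_G\otimes\Delta_H)$, $\varepsilon_{G\times H}=\varepsilon_G\otimes\varepsilon_H$ and $S_{G\times H}=S_G\otimes S_H$, a direct computation should give $\Delta_{G\times H}(w_{ia,jb})=\sum_{kc}w_{ia,kc}\otimes w_{kc,jb}$, together with $\varepsilon_{G\times H}(w_{ia,jb})=\delta_{ij}\delta_{ab}$ and $S_{G\times H}(w_{ia,jb})=w_{jb,ia}^*$, matching the conventions of Definition 1.1 for the combined index $(ia)$. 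These coincide with the structural maps inherited through the morphism of the first step (which, as noted in Section 1, automatically transports $\Delta,\varepsilon,S$), and they restrict to $C^*\langle w_{ia,jb}\rangle$. The one step needing genuine care is the role of the flip $\Sigma$ in the comultiplication: one must check that $\mathrm{id}\otimes\Sigma\otimes\mathrm{id}$ sends $\sum_{kc}(u_{ik}\otimes u_{kj})\otimes(v_{ac}\otimes v_{cb})$ to $\sum_{kc}(u_{ik}\otimes v_{ac})\otimes(u_{kj}\otimes v_{cb})=\sum_{kc}w_{ia,kc}\otimes w_{kc,jb}$, i.e.\ that this leg-reordering is exactly what turns the Kronecker product of corepresentations into the product comultiplication. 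This bookkeeping is the heart of the matter; once it is handled, the remaining verifications are routine, consistent with the statement being standard.
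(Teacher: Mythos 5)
Your computations are correct, and the decisive step in your argument---checking that $\Delta_{G\times H}=(\mathrm{id}\otimes\Sigma\otimes\mathrm{id})(\Delta_G\otimes\Delta_H)$, $\varepsilon_G\otimes\varepsilon_H$ and $S_G\otimes S_H$ satisfy the Definition 1.1 formulae on the generators, with the flip doing the leg-reordering---is exactly the paper's proof, which writes the same verification in leg-numbering notation, $\Delta(\alpha\otimes\beta)=\Delta(\alpha)_{13}\Delta(\beta)_{24}$, and then refers to Wang. What you add is the scaffolding the paper leaves implicit: biunitarity of $w$ plus universality of $C(U_{NM}^+)$, producing the coordinate-preserving morphism $C(U_{NM}^+)\to C(G)\otimes C(H)$ that realizes the inclusion $G\times H\subset U_{NM}^+$. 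One logical slip, though: ``a quotient of $C(U_{NM}^+)$, i.e.\ a closed quantum subgroup'' is not valid as stated, since most quotients of $C(U_{NM}^+)$ are not quantum subgroups (the kernel must be compatible with $\Delta,\varepsilon,S$). What makes your image a subgroup is precisely the structure-map verification you postpone to the last paragraph; in your scheme that verification is the load-bearing step and should precede, not follow, the subgroup claim.

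There is also a genuine divergence worth flagging: you \emph{define} $C(G\times H):=C^*\langle w_{ia,jb}\rangle$, while the paper equips the full algebra $C(G)\otimes C(H)$ with the product structure, and these need not coincide, because the entries of a Kronecker product do not generate the tensor product in general. For $G=H=\mathbb Z_2\subset U_1$, the matrix $w=(g\otimes h)$ generates the $2$-dimensional algebra $\mathrm{span}\{1,g\otimes h\}$ inside the $4$-dimensional $C(\mathbb Z_2\times\mathbb Z_2)$; for classical $G\subset U_N$, $H\subset U_M$ one gets $C((G\times H)/K)$, where $K=\{(\lambda 1_N,\lambda^{-1}1_M)\}\cap(G\times H)$ is a central subgroup. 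So, read literally against Definition 1.1 (whose coefficients must generate the algebra), the statement is only true with your image convention; the paper's own proof is silent on this point and carries the same imprecision. The discrepancy is harmless for everything the paper does later---the spaces $X^{min}_{\mathcal G,I}$, the fixed-point spaces $Fix(w^{\otimes k})$, and the Weingarten formula only involve the subalgebra generated by the $w_{ia,jb}$ together with the restriction of $\int_G\otimes\int_H$, which is the Haar state there---but your object and the statement's ``their product'' are not the same quantum group in general, and a careful write-up should say which one is meant.
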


\begin{proof}
The corresponding structural maps are $\Delta(\alpha\otimes\beta)=\Delta(\alpha)_{13}\Delta(\beta)_{24}$, $\varepsilon(\alpha\otimes\beta)=\varepsilon(\alpha)\varepsilon(\beta)$ and $S(\alpha\otimes\beta)=S(\alpha)S(\beta)$, the verifications being as follows:
\begin{eqnarray*}
\Delta(w_{ia,jb})&=&\Delta(u_{ij})_{13}\Delta(v_{ab})_{24}
=\sum_{kc}u_{ik}\otimes v_{ac}\otimes u_{kj}\otimes v_{cb}
=\sum_{kc}w_{ia,kc}\otimes w_{kc,jb}\\
\varepsilon(w_{ia,jb})&=&\varepsilon(u_{ij})\varepsilon(v_{ab})=\delta_{ij}\delta_{ab}=\delta_{ia,jb}\\
S(w_{ia,jb})&=&S(u_{ij})S(v_{ab})=v_{ba}^*u_{ji}^*=(u_{ji}v_{ba})^*=w_{jb,ia}^*
\end{eqnarray*}

We refer to Wang's paper \cite{wan} for more details regarding this construction.
\end{proof}

Let us call a closed quantum subgroup $G\subset U_N^+$ self-transpose when we have an automorphism $T:C(G)\to C(G)$ given by $T(u_{ij})=u_{ji}$. Observe that in the classical case, this amounts in $G\subset U_N$ to be closed under the transposition operation $g\to g^t$.

Finally, let us call $G\subset U_N^+$ reduced when its Haar functional is faithful. See \cite{wo1}.

With these notions in hand, let us go back to the affine homogeneous spaces. As a first result here, any closed subgroup $G\subset U_N^+$ appears as an affine homogeneous space over an appropriate quantum group, as follows:

\begin{proposition}
Given a reduced quantum subgroup $G\subset U_N^+$, we have an identification $X_{\mathcal G,I}^{min}\simeq G$, given at the level of standard coordinates by $x_{ij}=\frac{1}{\sqrt{N}}u_{ij}$, where:
\begin{enumerate}
\item $\mathcal G=G\times G^t\subset U_{N^2}^+$, with coordinates $w_{ia,jb}=u_{ij}\otimes u_{ba}$.

\item $I\subset\{1,\ldots,N\}^2$ is the diagonal set, $I=\{(k,k)|k=1,\ldots,N\}$.
\end{enumerate}
In the self-transpose case we can choose as well $\mathcal G=G\times G$, with $w_{ia,jb}=u_{ij}\otimes u_{ab}$.
\end{proposition}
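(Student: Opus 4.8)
The plan is to compute the standard coordinates of $X_{\mathcal G,I}^{min}$ explicitly, to recognize the resulting $C^*$-algebra as the image of the comultiplication $\Delta$ of $C(G)$, and then to use reducedness to conclude that $\Delta$ is injective. First I would invoke Theorem 3.3: inside $C(\mathcal G)=C(G)\otimes C(G)$ the algebra $C(X_{\mathcal G,I}^{min})$ is generated by the averaged coordinates $x_{ia}=\frac{1}{\sqrt{|I|}}\sum_{(j,b)\in I}w_{ia,jb}$. Since $|I|=N$ and $I$ is the diagonal, only the terms with $(j,b)=(k,k)$ survive, giving
$$x_{ia}=\frac{1}{\sqrt{|I|}}\sum_{(j,b)\in I}w_{ia,jb}=\frac{1}{\sqrt N}\sum_{k=1}^N w_{ia,kk}=\frac{1}{\sqrt N}\sum_{k=1}^N u_{ik}\otimes u_{ka}=\frac{1}{\sqrt N}\Delta(u_{ia}),$$
the last step being the definition of $\Delta$. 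As $\Delta$ is a morphism of $C^*$-algebras its image $\Delta(C(G))$ is closed, and it is generated by the elements $\Delta(u_{ia})$ since the $u_{ia}$ generate $C(G)$; hence $C(X_{\mathcal G,I}^{min})=\Delta(C(G))$. It then remains to show that $\Delta:C(G)\to\Delta(C(G))$ is an isomorphism, its inverse sending $x_{ia}$ to $\frac{1}{\sqrt N}u_{ia}$, as claimed.

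Equivalently, I must show that $\Delta$ is injective, and this is exactly where reducedness enters. Applying the Haar functional to the right invariance identity $(\int\otimes id)\Delta=\int(.)1$ yields $(\int\otimes\int)\Delta=\int$. Therefore, for $a\in C(G)$,
$$(\int\otimes\int)\big(\Delta(a)^*\Delta(a)\big)=(\int\otimes\int)\Delta(a^*a)=\int(a^*a).$$
If $\Delta(a)=0$ this forces $\int(a^*a)=0$, and since $G$ is reduced, i.e.\ $\int$ is faithful, we get $a=0$. Thus $\Delta$ is injective, and the identification $X_{\mathcal G,I}^{min}\simeq G$ follows in the $\mathcal G=G\times G^t$ case.

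For the self-transpose variant I would run the same computation with $\mathcal G=G\times G$ and $w_{ia,jb}=u_{ij}\otimes u_{ab}$, obtaining $x_{ia}=\frac{1}{\sqrt N}\sum_k u_{ik}\otimes u_{ak}=\frac{1}{\sqrt N}(id\otimes T)\Delta(u_{ia})$, where $T(u_{pq})=u_{qp}$ is the transposing automorphism. Since $id\otimes T$ is an automorphism of $C(G)\otimes C(G)$ and $\Delta$ is injective by the above, the composition $(id\otimes T)\Delta$ is again an injective morphism; its image is $C(X_{\mathcal G,I}^{min})$, and we obtain once more $X_{\mathcal G,I}^{min}\simeq G$ with $x_{ia}\mapsto\frac{1}{\sqrt N}u_{ia}$.

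The only genuine obstacle is the injectivity of $\Delta$. On the universal algebra this would follow immediately from $(\varepsilon\otimes id)\Delta=id$ by slicing with the counit, but in the reduced setting the counit $\varepsilon$ need not be bounded, so that argument is unavailable; the faithful Haar state supplied by the reducedness hypothesis, together with the identity $(\int\otimes\int)\Delta=\int$, is precisely what replaces it. Everything else in the proof is bookkeeping about generators and tensor products.
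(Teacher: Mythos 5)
Your proof is correct, and its core computation coincides with the paper's: the averaged coordinates of $X_{\mathcal G,I}^{min}$ are $\frac{1}{\sqrt N}\sum_k w_{ia,kk}=\frac{1}{\sqrt N}\Delta(u_{ia})$, which is exactly the paper's identity $\alpha(u_{ij})=\sum_k u_{ik}\otimes u_{kj}=\sum_k w_{ij,kk}$ (and $(id\otimes T)\Delta$ in the self-transpose case). Where the two arguments diverge is in what gets verified afterwards. The paper's proof checks that $\alpha=\Delta$ and $\Phi=(id\otimes\Sigma)\Delta^{(2)}$ satisfy the affine formulas of Definition 3.1 together with ergodicity, i.e.\ it exhibits $\mathcal G\to G$ as an affine extended homogeneous space, but it never addresses why $\alpha$ is injective --- which is precisely what upgrades the canonical surjection $C(G)\to C(X_{\mathcal G,I}^{min})$ to the claimed identification, and is the only place where the reducedness hypothesis is actually used. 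Your proof supplies exactly this point, via $(\int\otimes\int)\Delta=\int$ and faithfulness of the Haar state, and your remark that counit-slicing is unavailable in the reduced setting is the right explanation of why this hypothesis is needed. Conversely, you do not verify the coaction formula $\Phi(u_{ij})=\sum_{kl}w_{ij,kl}\otimes u_{kl}$ or ergodicity; since $X_{\mathcal G,I}^{min}$ is defined in Theorem 3.3 purely as a subalgebra of $C(\mathcal G)$ with prescribed generators, this omission does not affect the literal statement, but those verifications are what make the identified space an \emph{example} of the paper's affine homogeneous space theory, which is the purpose of section 6. In short: your argument fully proves the stated identification, is more careful than the paper on the injectivity/reducedness point, and trades away the structural verifications the paper includes.
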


\begin{proof}
In order to prove the first assertion, observe that $\alpha=\Delta$ and $\Phi=(id\otimes\Sigma)\Delta^{(2)}$ are given by the usual formulae for the affine homogeneous spaces, namely:
\begin{eqnarray*}
\alpha(u_{ij})&=&\sum_ku_{ik}\otimes u_{kj}=\sum_kw_{ij,kk}\\
\Phi(u_{ij})&=&\sum_{kl}u_{ik}\otimes u_{lj}\otimes u_{kl}=\sum_{kl}w_{ij,kl}\otimes u_{kl}
\end{eqnarray*}

The ergodicity condition being clear as well, this gives the result. 

Regarding now the last assertion, assume that we are in the self-transpose case, and so that we have an automorphism $T:C(G)\to C(G)$ given by $T(u_{ij})=u_{ji}$. The maps $\alpha=(id\otimes T)\Delta$ and $\Phi=(id\otimes T\otimes id)(id\otimes\Sigma)\Delta^{(2)}$ are then given by:
\begin{eqnarray*}
\alpha(u_{ij})&=&\sum_ku_{ik}\otimes u_{jk}=\sum_kw_{ij,kk}\\
\Phi(u_{ij})&=&\sum_{kl}u_{ik}\otimes u_{jl}\otimes u_{kl}=\sum_{kl}w_{ij,kl}\otimes u_{kl}
\end{eqnarray*}

Once again the ergodicity condition being clear as well, this gives the result.
\end{proof}

Let us discuss now the generalization of the above result, to the context of the spaces introduced in \cite{bss}. We recall from there that we have the following construction:

\begin{definition}
Given a closed subgroup $G\subset U_N^+$ and an integer $M\leq N$ we set
$$C(G_{N\times M})=\left<u_{ij}\Big|i\in\{1,\ldots,N\},j\in\{1,\ldots,M\}\right>\subset C(G)$$
and we call column space of $G$ the underlying quotient space $G\to G_{N\times M}$.
\end{definition}

As a basic example here, at $M=N$ we obtain $G$ itself. Also, at $M=1$ we obtain the space whose coordinates are those on the first column of coordinates on $G$. See \cite{bss}.

Given $G\subset U_N^+$ and an integer $M\leq N$, we can consider the quantum group $H=G\cap U_M^+$, with the intersection taken inside $U_N^+$, and with $U_M^+\subset U_N^+$ given by $u=diag(v,1_{N-M})$. Observe that we have a quotient map $C(G)\to C(H)$, given by $u_{ij}\to v_{ij}$. 

We have the following extension of Proposition 6.3:

\begin{theorem}
Given a reduced quantum subgroup $G\subset U_N^+$, we have an identification $X_{\mathcal G,I}^{min}\simeq G_{N\times M}$, given at the level of standard coordinates by $x_{ij}=\frac{1}{\sqrt{M}}u_{ij}$, where:
\begin{enumerate}
\item $\mathcal G=G\times H^t\subset U_{NM}^+$, where $H=G\cap U_M^+$, with coordinates $w_{ia,jb}=u_{ij}\otimes v_{ba}$.

\item $I\subset\{1,\ldots,N\}\times\{1,\ldots,M\}$ is the diagonal set, $I=\{(k,k)|k=1,\ldots,M\}$.
\end{enumerate}
In the self-transpose case we can choose as well $\mathcal G=G\times G$, with $w_{ia,jb}=u_{ij}\otimes v_{ab}$.
\end{theorem}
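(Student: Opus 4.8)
The plan is to follow the blueprint of Proposition 6.3, producing explicit maps $\alpha,\Phi$ of the required affine form and then upgrading the resulting identification to a genuine isomorphism via faithfulness of the Haar functional. Throughout, write $\rho:C(G)\to C(H)$ for the quotient map attached to $H=G\cap U_M^+$, so that $\rho(u_{kl})=v_{kl}$ for $k,l\leq M$ while $\rho(u_{kl})=0$ as soon as exactly one of $k,l$ exceeds $M$; recall also that $C(\mathcal G)=C(G)\otimes C(H)$, with Haar functional $\int_{\mathcal G}=\int_G\otimes\int_H$. First I would set, on $C(G_{N\times M})\subset C(G)$,
$$\alpha=(id\otimes\rho)\Delta,\qquad \Phi=(id\otimes\rho\otimes id)(id\otimes\Sigma)\Delta^{(2)}.$$
A direct computation from $\Delta(u_{ia})=\sum_k u_{ik}\otimes u_{ka}$ shows that, for $a\leq M$, applying $\rho$ to the intermediate leg kills every term whose summation index exceeds $M$, leaving
$$\alpha(x_{ia})=\frac{1}{\sqrt M}\sum_{k=1}^M u_{ik}\otimes v_{ka}=\frac{1}{\sqrt M}\sum_{(j,b)\in I}w_{ia,jb},\qquad \Phi(x_{ia})=\sum_{j,b}w_{ia,jb}\otimes x_{jb}.$$
These are exactly the affine homogeneous space formulae of Definition 3.1, for the stated diagonal set $I$; in particular $\Phi$ indeed lands in $C(\mathcal G)\otimes C(G_{N\times M})$, since its third leg only involves columns $b\leq M$.

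Next I would verify the two axioms. The first one, $(id\otimes\alpha)\Phi=\Delta_{\mathcal G}\alpha$, is automatic for maps of affine form, as recorded in the remark following Definition 3.1. For ergodicity, writing $\psi=\int_H\rho$ for the induced state on $C(G)$, I would compute for $f\in C(G_{N\times M})$
$$\big(\textstyle\int_{\mathcal G}\otimes id\big)\Phi(f)=\big(\int_G\otimes\psi\otimes id\big)(id\otimes\Sigma)\Delta^{(2)}(f)=\int_G(f)\cdot 1,$$
the last equality coming from $(\int_G\otimes\psi)\Delta=\int_G(\cdot)1$, which is just right invariance of the Haar functional of $G$ followed by $\psi$. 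The same single-leg contraction gives $\int_X=\int_{\mathcal G}\alpha=\int_G$ on $C(G_{N\times M})$, so the displayed identity is precisely the ergodicity relation $(\int_{\mathcal G}\otimes id)\Phi=\int_{\mathcal G}\alpha(\cdot)1$; coassociativity of $\Phi$ then follows from Proposition 2.2(1).

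The crucial step, and the one genuinely using that $G$ is reduced, is injectivity of $\alpha$: this is what turns the tautological inclusion $C(X_{\mathcal G,I}^{min})=\alpha(C(G_{N\times M}))\subset C(\mathcal G)$ into the asserted isomorphism $X_{\mathcal G,I}^{min}\simeq G_{N\times M}$. The identity $\int_{\mathcal G}\alpha=\int_G$ above holds in fact on all of $C(G)$, so for $f\in C(G_{N\times M})$ we get $\int_{\mathcal G}(\alpha(f)^*\alpha(f))=\int_G(f^*f)$; hence $\alpha(f)=0$ forces $\int_G(f^*f)=0$, so $f=0$ by faithfulness of the Haar functional on the reduced algebra $C(G)$. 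Thus $\alpha$ is an injective $*$-morphism whose image is exactly the algebra generated by the $X_{(i,a)}$, and $x_{ij}=\tfrac{1}{\sqrt M}u_{ij}$ is the promised coordinate identification. I expect this to be the main obstacle: unlike in Proposition 6.3, where $\alpha=\Delta$ is injective for free, here $\alpha\neq\Delta$, and one must replace the counit — which need not be bounded on the reduced algebra — by the Haar computation $\int_{\mathcal G}\alpha=\int_G$.

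Finally, the self-transpose case is obtained by the same device as in the second half of Proposition 6.3: the automorphism $T:u_{ij}\mapsto u_{ji}$ lets one replace the factor $H^t$ by a copy of $G$, so that $\mathcal G=G\times G$ with $w_{ia,jb}=u_{ij}\otimes v_{ab}$, the maps $\alpha,\Phi$ being modified by inserting $T$ on the second leg while the restriction to $I$ still confines the relevant summation to indices $\leq M$. With these modifications the affine form, the ergodicity computation and the injectivity argument all go through unchanged.
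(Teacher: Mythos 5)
Your proposal is correct and follows essentially the same route as the paper: the identification is implemented by the very same map $\alpha=(id\otimes\rho)\Delta$, that is, the composition $C(G_{N\times M})\subset C(G)\to C(G)\otimes C(G)\to C(G)\otimes C(H)$, with the same computation showing that $x_{ij}=\frac{1}{\sqrt{M}}u_{ij}$ is sent to the standard coordinates of $X_{\mathcal G,I}^{min}$. The paper's proof essentially stops at that coordinate computation, so your extra verifications (the ergodicity check, and the injectivity of $\alpha$ via $\int_{\mathcal G}\alpha=\int_G$ together with faithfulness of the Haar state, which is exactly where the reducedness hypothesis enters) simply make explicit what the paper leaves implicit.
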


\begin{proof}
We will prove that the space $X=G_{N\times M}$, with coordinates $x_{ij}=\frac{1}{\sqrt{M}}u_{ij}$, coincides with the space $X_{\mathcal G,I}^{min}$ constructed in the statement, with its standard coordinates.

For this purpose, consider the following composition of morphisms, where in the middle we have the comultiplication, and at left and right we have the canonical maps:
$$C(X)\subset C(G)\to C(G)\otimes C(G)\to C(G)\otimes C(H)$$

The standard coordinates are then mapped as follows:
$$x_{ij}=\frac{1}{\sqrt{M}}u_{ij}\to\frac{1}{\sqrt{M}}\sum_ku_{ik}\otimes u_{kj}\to\frac{1}{\sqrt{M}}\sum_{k\leq M}u_{ik}\otimes v_{kj}=\frac{1}{\sqrt{M}}\sum_{k\leq M}w_{ij,kk}$$

Thus we obtain the standard coordinates on the space $X_{\mathcal G,I}^{min}$, as claimed. Finally, the last assertion is standard as well, by suitably modifying the above morphism.
\end{proof}

Let us mention that, with a little more work, one can prove that the spaces $G_{N\times M}^L$ from \cite{ba2}, depending on an extra parameter $L\in\{1,\ldots,M\}$, are covered as well by our formalism, the idea here being to truncate the index set, $I=\{(k,k)|k=1,\ldots,L\}$.

\section{The easy case}

We discuss now what happens when $G$ is easy, or more generally, motivated by the examples in section 6 above, when it is a product of easy quantum groups. 

Regarding easiness in general, we refer to \cite{bsp}, \cite{rwe}, \cite{twe}. In the context of the present paper,  let us go back to the Schur-Weyl considerations in section 4:
\begin{enumerate}
\item We would need there explicit bases $\{\xi_\pi|\pi\in D(k)\}$ for the spaces $Fix(u^{\otimes k})$, along with, if possible, explicit formulae for the vector entries $(\xi_\pi)_{i_1\ldots i_k}$. 

\item Equivalently, we would need bases $\{T_\pi|\pi\in D(k,l)\}$ for the spaces $Hom(u^{\otimes k},u^{\otimes l})$, along with explicit formulae for the matrix entries $(T_\pi)_{i_1\ldots i_k,j_1,\ldots j_l}$.
\end{enumerate} 

Here the equivalence between (1) and (2) is standard, see \cite{wo1}. Now in order to do so, one idea is to use set-theoretic partitions, and the following construction:

\begin{definition}
Associated to any partition $\pi\in P(k,l)$ is the linear map
$$T_\pi(e_{i_1}\otimes\ldots\otimes e_{i_k})=\sum_{j_1\ldots j_l}\delta_\pi\binom{i_1\ldots i_k}{j_1 \ldots j_l}e_{j_1}\otimes\ldots\otimes e_{j_l}$$
where $\delta_\pi\in\{0,1\}$ equals $1$ when the indices fit, and equals $0$ otherwise.
\end{definition}

Here $\pi\in P(k,l)$ means that $\pi$ has $k$ upper legs and $l$ lower legs, and by ``fitting'' we mean that, when putting the indices on the legs, each block contains equal indices.

In order to get now back to the quantum groups, we use Tannakian duality. Let us recall from \cite{bsp}, \cite{twe} that a category of partitions is a collection of subsets $D(k,l)\subset P(k,l)$, one for each choice of colored integers $k,l$, which is stable under vertical and horizontal concatenation, and under upside-down turning. With this convention, we have:

\begin{definition}
A closed quantum subgroup $G\subset U_N^+$ is called easy when we have
$$Hom(u^{\otimes k},u^{\otimes l})=span\left(T_\pi\Big|\pi\in D(k,l)\right)$$
for a certain category of partitions $D=(D(k,l))$.
\end{definition}

As basic examples, we have the groups $S_N,O_N,U_N$, coming from the categories of all partitions/pairings/matching pairings, and their free analogues $S_N^+,O_N^+,U_N^+$, coming from the categories of noncrossing partitions/pairings/matching pairings. See \cite{bsp}, \cite{twe}.

Now back to our homogeneous space questions, we have:

\begin{proposition}
When $G\subset U_N^+$ is easy, coming from a category of partitions $D$, the space $X_{G,I}\subset S^{N-1}_{\mathbb C,+}$ appears by imposing the relations
$$\sum_{i_1\ldots i_k}\delta_\pi(i_1\ldots i_k)x_{i_1}^{e_1}\ldots x_{i_k}^{e_k}=|I|^{|\pi|-k/2},\quad\forall k,\forall\pi\in D(k)$$
where $D(k)=D(0,k)$, and where $|.|$ denotes the number of blocks.
\end{proposition}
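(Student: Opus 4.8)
The plan is to deduce this directly from the description of $X_{G,I}$ in Theorem 4.3, by specializing the fixed vectors $\xi$ appearing there to the standard easy basis. Recall that Theorem 4.3 presents $C(X_{G,I})$ as the quotient of $C(S^{N-1}_{\mathbb C,+})$ by the relations
$$\sum_{a_1\ldots a_k}\xi_{a_1\ldots a_k}x_{a_1}^{e_1}\ldots x_{a_k}^{e_k}=\frac{1}{\sqrt{|I|^k}}\sum_{b_1\ldots b_k\in I}\xi_{b_1\ldots b_k},$$
one for each $k$ and each $\xi\in Fix(u^{\otimes k})$. My first observation would be that both sides of this relation are linear in the vector $\xi$; hence it suffices to impose the relation as $\xi$ ranges over any spanning family of $Fix(u^{\otimes k})$, rather than over all of $Fix(u^{\otimes k})$.

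Next I would use that when $G$ is easy, with category of partitions $D$, such a spanning family is provided by the vectors $\xi_\pi$ with $\pi\in D(k)=D(0,k)$, obtained from the maps of Definition 7.1 as $\xi_\pi=T_\pi(1)$. This is precisely the standard equivalence between statements (1) and (2) of the Schur--Weyl discussion opening the section, specialized to $Fix(u^{\otimes k})=Hom(u^{\otimes 0},u^{\otimes k})$. Concretely, the entries of these vectors are $(\xi_\pi)_{i_1\ldots i_k}=\delta_\pi(i_1\ldots i_k)$, so that substituting $\xi=\xi_\pi$ into the left-hand side of the relation gives immediately the left-hand side of the formula in the statement.

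It would then remain to evaluate the right-hand side, which is the only genuine computation. With $\xi=\xi_\pi$ one must compute $\sum_{b_1\ldots b_k\in I}\delta_\pi(b_1\ldots b_k)$. Here $\delta_\pi(b_1\ldots b_k)$ equals $1$ exactly when the tuple $(b_1,\ldots,b_k)$ is constant on each block of $\pi$, so that such a tuple with all entries in $I$ amounts to an independent choice of one index in $I$ for each of the $|\pi|$ blocks; this yields $\sum_{b_1\ldots b_k\in I}\delta_\pi(b_1\ldots b_k)=|I|^{|\pi|}$. Dividing by $\sqrt{|I|^k}=|I|^{k/2}$ gives $|I|^{|\pi|-k/2}$, which is exactly the right-hand side claimed. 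By the linearity remark, imposing the relations for $\xi=\xi_\pi$ over all $\pi\in D(k)$ is equivalent to imposing them for all $\xi\in Fix(u^{\otimes k})$, and so this identifies the defining relations of $X_{G,I}$ with those in the statement. There is no substantial obstacle here: the computation is elementary, and the only point needing care is the passage from an abstract spanning set to the partition vectors, which is harmless because $\delta_\pi\in\{0,1\}$ forces the $\xi_\pi$ to have real entries, so that no conjugation subtlety arises in matching the two sides.
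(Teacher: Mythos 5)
Your proposal is correct and follows essentially the same route as the paper: both apply Theorem 4.3 with the spanning vectors $\xi_\pi=T_\pi$, $\pi\in D(k)$, and evaluate $\sum_{b_1\ldots b_k\in I}\delta_\pi(b_1\ldots b_k)=|I|^{|\pi|}$ to get the right-hand side. Your explicit linearity remark (that a spanning family suffices, since both sides of the relations in Theorem 4.3 are linear in $\xi$, with no conjugation involved) is a point the paper leaves implicit, but it is the same argument.
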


\begin{proof}
We know by easiness that $Fix(u^{\otimes k})$ is spanned by the vectors $\xi_\pi=T_\pi$, with $\pi\in D(k)$. According to Definition 7.1, these latter vectors are given by:
$$\xi_\pi=\sum_{i_1\ldots i_k}\delta_\pi(i_1\ldots i_k)e_{i_1}\otimes\ldots\otimes e_{i_k}$$

By applying now Theorem 4.3, with this particular choice of the vectors $\{\xi_\pi\}$, we deduce that $X_{G,I}\subset S^{N-1}_{\mathbb C,+}$ appears by imposing the following relations:
$$\sum_{i_1\ldots i_k}\delta_\pi(i_1\ldots i_k)x_{i_1}^{e_1}\ldots x_{i_k}^{e_k}=\frac{1}{\sqrt{|I|^k}}\sum_{b_1\ldots b_k\in I}\delta_\pi(b_1\ldots b_k),\quad\forall k,\forall\pi\in D(k)$$

Now since the sum on the right equals $|I|^{|\pi|}$, this gives the result.
\end{proof}

More generally now, in view of the examples from section 6 above, making the link with \cite{bss}, it is interesting to work out what happens when $G$ is a product of easy quantum groups, and the index set $I$ appears as $I=\{(c,\ldots,c)|c\in J\}$, for a certain set $J$.

The result here, in its most general form, is as follows:

\begin{theorem}
For a product of easy quantum groups, $G=G_{N_1}^{(1)}\times\ldots\times G_{N_s}^{(s)}$, and with $I=\{(c,\ldots,c)|c\in J\}$, the space $X_{G,I}\subset S^{N-1}_{\mathbb C,+}$ appears by imposing the relations
$$\sum_{i_1\ldots i_k}\delta_\pi(i_1\ldots i_k)x_{i_1}^{e_1}\ldots x_{i_k}^{e_k}=|J|^{|\pi_1\vee\ldots\vee\pi_s|-k/2},\quad\forall k,\forall\pi\in D^{(1)}(k)\times\ldots\times D^{(s)}(k)$$
where $D^{(r)}\subset P$ is the category of partitions associated to $G_{N_r}^{(r)}\subset U_{N_r}^+$, and where the partition $\pi_1\vee\ldots\vee\pi_s\in P(k)$ is the one obtained by superposing $\pi_1,\ldots,\pi_s$.
\end{theorem}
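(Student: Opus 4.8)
The plan is to reduce the theorem to Proposition 7.4 applied to the product quantum group $G=G_{N_1}^{(1)}\times\ldots\times G_{N_s}^{(s)}$, the key being to understand the fixed-point spaces $Fix(w^{\otimes k})$ of the product corepresentation and the relevant sum $\sum_{b\in I}(\xi)_{b_1\ldots b_k}$ over the diagonal-type index set $I$. First I would recall, via Proposition 6.2 and standard Tannakian reasoning, that for a product $G=G_{N_1}^{(1)}\times\ldots\times G_{N_s}^{(s)}\subset U_N^+$ with $N=N_1\cdots N_s$ the fundamental corepresentation is $w_{i_1\ldots i_s,j_1\ldots j_s}=u^{(1)}_{i_1j_1}\otimes\ldots\otimes u^{(s)}_{i_sj_s}$, and consequently the fixed vectors of $w^{\otimes k}$ are spanned by tensor products of the fixed vectors of the individual $(u^{(r)})^{\otimes k}$. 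Concretely, if each factor $G_{N_r}^{(r)}$ is easy with category $D^{(r)}$, then a spanning family for $Fix(w^{\otimes k})$ is indexed by $s$-tuples $\pi=(\pi_1,\ldots,\pi_s)\in D^{(1)}(k)\times\ldots\times D^{(s)}(k)$, with the associated vector given in multi-indices $i_r=(i_r^{(1)},\ldots,i_r^{(s)})$ by the product of delta functions $\delta_{\pi_1}\cdots\delta_{\pi_s}$.

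**Carrying out the identification.**

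Next I would apply Theorem 4.3 (equivalently, Proposition 7.4 in the product setting) using this spanning family. The defining relations for $X_{G,I}\subset S^{N-1}_{\mathbb C,+}$ become
$$\sum_{i_1\ldots i_k}\delta_{\pi_1}\cdots\delta_{\pi_s}\,x_{i_1}^{e_1}\ldots x_{i_k}^{e_k}=\frac{1}{\sqrt{|I|^k}}\sum_{b_1\ldots b_k\in I}\delta_{\pi_1}(b_1\ldots b_k)\cdots\delta_{\pi_s}(b_1\ldots b_k),$$
for all $k$ and all $\pi\in D^{(1)}(k)\times\ldots\times D^{(s)}(k)$, where each $x_{i}$ is indexed by a multi-index $i=(i^{(1)},\ldots,i^{(s)})$ and $\delta_{\pi_r}$ reads the $r$-th components of the legs. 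The left-hand coefficient $\delta_{\pi_1}\cdots\delta_{\pi_s}$ is exactly $\delta_{\pi_1\vee\ldots\vee\pi_s}$, since a product of block-equality constraints forces equality of indices across the join (superposition) $\pi_1\vee\ldots\vee\pi_s$ of the partitions. This identifies the left-hand side of the claimed relation. The remaining work is to evaluate the right-hand sum.

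**Evaluating the sum over the diagonal set.**

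The main computation, and the step I expect to require the most care, is the count on the right. With $I=\{(c,\ldots,c)\mid c\in J\}$, summing a multi-index $b$ over $I$ means each leg carries a single value $c\in J$ repeated in all $s$ components, so $\delta_{\pi_r}(b_1\ldots b_k)$ simultaneously demands that within each block of $\pi_r$ the scalar values agree. Summing over such diagonal assignments, $\prod_r\delta_{\pi_r}=1$ precisely when the chosen values $c_1,\ldots,c_k\in J$ are constant on every block of every $\pi_r$, i.e.\ constant on every block of the join $\pi_1\vee\ldots\vee\pi_s$. Thus the number of valid assignments is $|J|^{|\pi_1\vee\ldots\vee\pi_s|}$, one free choice in $J$ per block of the join. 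Since $|I|=|J|$, the prefactor $1/\sqrt{|I|^k}=|J|^{-k/2}$ combines with this count to give $|J|^{|\pi_1\vee\ldots\vee\pi_s|-k/2}$, which is exactly the right-hand side in the statement. The principal subtlety here is to make sure the join really does capture the conjunction of the separate block-equality constraints — that is, that indices forced equal by the superposed partition are precisely those forced equal by all of $\pi_1,\ldots,\pi_s$ together — but this is the definition of the join in the partition lattice, so once the multi-index bookkeeping is set up correctly the count is immediate. Combining the left- and right-hand identifications completes the proof.
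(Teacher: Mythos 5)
Your overall strategy coincides with the paper's: by Wang's results on products, $Fix(w^{\otimes k})$ is spanned by the vectors $\xi_{\pi_1}\otimes\ldots\otimes\xi_{\pi_s}$ with $(\pi_1,\ldots,\pi_s)\in D^{(1)}(k)\times\ldots\times D^{(s)}(k)$; Theorem 4.3 then yields the defining relations; and the sum over the diagonal set $I$ is evaluated by observing that on diagonal indices the product of deltas collapses to $\delta_{\pi_1\vee\ldots\vee\pi_s}$, giving the count $|J|^{|\pi_1\vee\ldots\vee\pi_s|}$. Your right-hand side computation is exactly the one in the paper, and it is correct.

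There is, however, one genuine error, concerning the left-hand side. You claim that the coefficient $\delta_{\pi_1}\cdots\delta_{\pi_s}$, where each $\delta_{\pi_r}$ reads the $r$-th components of the multi-indices, ``is exactly $\delta_{\pi_1\vee\ldots\vee\pi_s}$''. This is false off the diagonal. Take $s=2$, $k=2$, $\pi_1=\{\{1,2\}\}$, $\pi_2=\{\{1\},\{2\}\}$: the product $\delta_{\pi_1}(i_1^{(1)}i_2^{(1)})\,\delta_{\pi_2}(i_1^{(2)}i_2^{(2)})$ requires only $i_1^{(1)}=i_2^{(1)}$, whereas $\delta_{\pi_1\vee\pi_2}$ evaluated on the multi-indices requires $i_1=i_2$ in full, a strictly stronger constraint. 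The identity $\prod_r\delta_{\pi_r}=\delta_{\pi_1\vee\ldots\vee\pi_s}$ holds only when all the deltas read one and the same index string, which is precisely what happens on the diagonal set $I$ --- and this is exactly why the join appears on the right-hand side of the theorem but not on the left. In the statement, $\delta_\pi(i_1\ldots i_k)$ simply denotes the entry $(\xi_{\pi_1}\otimes\ldots\otimes\xi_{\pi_s})_{i_1\ldots i_k}=\prod_r\delta_{\pi_r}(i_1^{(r)}\ldots i_k^{(r)})$, so the left-hand side requires no identification at all: it comes out of Theorem 4.3 already in the stated form. As written, your ``identification'' replaces the relations indexed by all tuples $(\pi_1,\ldots,\pi_s)$ by relations indexed only by the diagonal-type vectors attached to joins; these span in general a proper subspace of $Fix(w^{\otimes k})$, so you would be imposing strictly fewer relations and hence defining a possibly larger space than $X_{G,I}$. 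Deleting that one sentence repairs the argument, and what remains is precisely the paper's proof.
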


\begin{proof}
Since we are in a direct product situation, $G=G_{N_1}^{(1)}\times\ldots\times G_{N_s}^{(s)}$, the general theory in \cite{wan} applies, and shows that a basis for $Fix(u^{\otimes k})$ is provided by the vectors $\rho_\pi=\xi_{\pi_1}\otimes\ldots\otimes\xi_{\pi_s}$, with $\pi=(\pi_1,\ldots,\pi_s)\in D^{(1)}(k)\times\ldots\times D^{(s)}(k)$.

Once again Theorem 4.3 applies, and shows that the space $X_{G,I}\subset S^{N-1}_{\mathbb C,+}$ appears by imposing the following relations to the standard coordinates:
$$\sum_{i_1\ldots i_k}\delta_\pi(i_1\ldots i_k)x_{i_1}^{e_1}\ldots x_{i_k}^{e_k}=\frac{1}{\sqrt{|I|^k}}\sum_{b_1\ldots b_k\in I}\delta_\pi(b_1\ldots b_k),\quad\forall k,\forall\pi\in D^{(1)}(k)\times\ldots\times D^{(s)}(k)$$

Since the conditions $b_1,\ldots,b_k\in I$ read $b_1=(c_1,\ldots,c_1),\ldots,b_k=(c_k,\ldots,c_k)$, for certain elements $c_1,\ldots c_k\in J$, the sums on the right are given by:
\begin{eqnarray*}
\sum_{b_1\ldots b_k\in I}\delta_\pi(b_1\ldots b_k)
&=&\sum_{c_1\ldots c_k\in J}\delta_\pi(c_1,\ldots,c_1,\ldots\ldots,c_k,\ldots,c_k)\\
&=&\sum_{c_1\ldots c_k\in J}\delta_{\pi_1}(c_1\ldots c_k)\ldots\delta_{\pi_s}(c_1\ldots c_k)\\
&=&\sum_{c_1\ldots c_k\in J}\delta_{\pi_1\vee\ldots\vee\pi_s}(c_1\ldots c_k)
\end{eqnarray*}

Now since the sum on the right equals $|J|^{|\pi_1\vee\ldots\vee\pi_s|}$, this gives the result.
\end{proof}

\section{Probabilistic aspects}

Consider the spaces $X=X_{G,I}$ from Theorem 7.4. Our purpose now will be to establish some liberation results, in the sense of the Bercovici-Pata bijection \cite{bpa}. 

As in \cite{ba1}, \cite{ba2}, we use suitable sums of ``non-overlapping'' coordinates. To be more precise, since we are in a direct product situation, in $N=N_1\ldots N_s$ dimensions, we can consider ``diagonal'' coordinates $x_{i\ldots i}$, and then sum them over various indices $i$.

As a first result regarding such variables, we have:

\begin{proposition}
The moments of the variable $\chi_T=\sum_{i\leq T}x_{i\ldots i}$ are given by
$$\int_X\chi_T^k\simeq\frac{1}{\sqrt{M^k}}\sum_{\pi\in D^{(1)}(k)\cap\ldots\cap D^{(s)}(k)}\left(\frac{TM}{N}\right)^{|\pi|}$$
in the $N_i\to\infty$ limit, $\forall i$, where $M=|I|$, and $N=N_1\ldots N_s$.
\end{proposition}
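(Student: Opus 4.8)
The plan is to compute the moment directly from the Weingarten formula of Proposition 4.2, specialized to the product situation of Theorem 7.4, and then extract the leading order as all $N_i\to\infty$.

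First I would expand $\chi_T^k=\sum_{i_1\ldots i_k\leq T}x_{i_1\ldots i_1}\cdots x_{i_k\ldots i_k}$ and apply Proposition 4.2 with the basis $\{\rho_\pi=\xi_{\pi_1}\otimes\cdots\otimes\xi_{\pi_s}\}$ of $Fix(u^{\otimes k})$ supplied by the direct product theory (as in the proof of Theorem 7.4), indexed by $\pi=(\pi_1,\ldots,\pi_s)\in D^{(1)}(k)\times\cdots\times D^{(s)}(k)$. The key point is that on a diagonal multi-index $d=(i,\ldots,i)$ the tensor factorizes, so that $(\rho_\pi)_{d_1\ldots d_k}=\prod_r(\xi_{\pi_r})_{i_1\ldots i_k}=\delta_{\pi_1\vee\ldots\vee\pi_s}(i_1\ldots i_k)$; summing over $i_1,\ldots,i_k\leq T$ gives $T^{|\pi_1\vee\ldots\vee\pi_s|}$. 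The same factorization applied to $I=\{(c,\ldots,c)|c\in J\}$ yields $K_I(\sigma)=M^{|\sigma_1\vee\ldots\vee\sigma_s|}/\sqrt{M^k}$, with $M=|J|=|I|$. Thus
$$\int_X\chi_T^k=\frac{1}{\sqrt{M^k}}\sum_{\pi,\sigma}T^{|\pi_1\vee\ldots\vee\pi_s|}M^{|\sigma_1\vee\ldots\vee\sigma_s|}W_{kN}(\pi,\sigma).$$

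Next I would invoke the asymptotics of the Weingarten matrix. Since the Gram matrix factorizes as $\langle\rho_\pi,\rho_\sigma\rangle=\prod_rN_r^{|\pi_r\vee\sigma_r|}$, its diagonal $\prod_rN_r^{|\pi_r|}$ dominates as all $N_i\to\infty$, so that $W_{kN}(\pi,\sigma)=\prod_rN_r^{-|\pi_r|}(\delta_{\pi\sigma}+o(1))$. Substituting this diagonal leading term reduces the double sum to $\frac{1}{\sqrt{M^k}}\sum_\pi(TM)^{|\pi_1\vee\ldots\vee\pi_s|}\prod_rN_r^{-|\pi_r|}$. The combinatorial heart is then the following: for a fixed join $\rho=\pi_1\vee\ldots\vee\pi_s$, the factor $\prod_rN_r^{-|\pi_r|}$ is maximal exactly when each $|\pi_r|$ is minimal, that is $|\pi_r|=|\rho|$, which forces $\pi_r=\rho$ for all $r$ (since $\pi_r\leq\rho$ always). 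Hence the surviving terms are precisely those with $\pi_1=\ldots=\pi_s=\rho$, necessarily with $\rho\in D^{(1)}(k)\cap\ldots\cap D^{(s)}(k)$, and each contributes $(TM)^{|\rho|}N^{-|\rho|}=(TM/N)^{|\rho|}$ because $N=N_1\cdots N_s$; every other term is smaller by a factor $N_{r_0}^{-1}$ for some $r_0$ and vanishes in the limit. This yields the stated formula.

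The hard part will be making the Weingarten asymptotics fully rigorous in the product setting: one must justify that both the off-diagonal entries $W_{kN}(\pi,\sigma)$ with $\pi\neq\sigma$ and the diagonal terms with the $\pi_r$ not all equal are genuinely subleading, uniformly over the index set, and be careful about the joint regime in which all $N_i\to\infty$ (together with whatever growth is permitted for $T$), rather than simply quoting the single-variable Weingarten estimate. Once the leading diagonal behaviour is controlled, the identification of the dominant configurations $\pi_1=\ldots=\pi_s$ is purely lattice-theoretic and routine.
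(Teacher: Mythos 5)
Your proposal is correct and follows essentially the same route as the paper: expand the moment, reduce it via the Weingarten machinery to $\frac{1}{\sqrt{M^k}}\sum_{\pi,\sigma}T^{|\pi_1\vee\ldots\vee\pi_s|}M^{|\sigma_1\vee\ldots\vee\sigma_s|}W(\pi,\sigma)$, invoke concentration of the Weingarten matrix on the diagonal, and then observe that among diagonal terms the dominant ones are those with $\pi_1=\ldots=\pi_s\in D^{(1)}(k)\cap\ldots\cap D^{(s)}(k)$, each contributing $(TM/N)^{|\pi|}$. The only difference is organizational: you apply Proposition 4.2 for the product group with the tensor basis $\rho_\pi=\xi_{\pi_1}\otimes\ldots\otimes\xi_{\pi_s}$, getting a single Weingarten matrix $W_{kN}$, whereas the paper integrates factor-wise and gets the product $W^{(1)}_{kN_1}(\pi_1,\sigma_1)\cdots W^{(s)}_{kN_s}(\pi_s,\sigma_s)$; these coincide since the Gram matrix of the $\rho_\pi$ is the tensor product of the factors' Gram matrices, and your explicit lattice argument ($\pi_r\leq\rho$ and $|\pi_r|=|\rho|$ force $\pi_r=\rho$) is exactly the step the paper leaves implicit in its final ``$\simeq$''.
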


\begin{proof}
We have the following formula:
$$\pi(x_{i_1\ldots i_s})=\frac{1}{\sqrt{M}}\sum_{c\in J}u_{i_1c}\otimes\ldots\otimes u_{i_sc}$$

For the variable in the statement, we therefore obtain:
$$\pi(\chi_T)=\frac{1}{\sqrt{M}}\sum_{i\leq T}\sum_{c\in J}u_{ic}\otimes\ldots\otimes u_{ic}$$

Now by raising to the power $k$ and integrating, we obtain:
\begin{eqnarray*}
\int_X\chi_T^k
&=&\frac{1}{\sqrt{M^k}}\sum_{i_1\ldots i_k\leq T}\sum_{c_1\ldots c_k\in J}\int_{G^{(1)}}u_{i_1c_1}\ldots u_{i_kc_k}\ldots\ldots\int_{G^{(s)}}u_{i_1c_1}\ldots u_{i_kc_k}\\
&=&\frac{1}{\sqrt{M^k}}\sum_{ic}\sum_{\pi\sigma}\delta_{\pi_1}(i)\delta_{\sigma_1}(c)W_{kN_1}^{(1)}(\pi_1,\sigma_1)\ldots\delta_{\pi_s}(i)\delta_{\sigma_s}(c)W_{kN_s}^{(s)}(\pi_s,\sigma_s)\\
&=&\frac{1}{\sqrt{M^k}}\sum_{\pi\sigma}T^{|\pi_1\vee\ldots\vee\pi_s|}M^{|\sigma_1\vee\ldots\vee\sigma_s|}
W_{kN_1}^{(1)}(\pi_1,\sigma_1)\ldots W_{kN_s}^{(s)}(\pi_s,\sigma_s)
\end{eqnarray*}

We use now the standard fact, from \cite{ba2}, that the Weingarten functions are concentrated on the diagonal. Thus in the limit we must have $\pi_i=\sigma_i$ for any $i$, and we obtain:
\begin{eqnarray*}
\int_X\chi_T^k
&\simeq&\frac{1}{\sqrt{M^k}}\sum_\pi T^{|\pi_1\vee\ldots\vee\pi_s|}M^{|\pi_1\vee\ldots\vee\pi_s|}N_1^{-|\pi_1|}\ldots N_s^{-|\pi_s|}\\
&\simeq&\frac{1}{\sqrt{M^k}}\sum_{\pi\in D^{(1)}\cap\ldots\cap D^{(s)}}T^{|\pi|}M^{|\pi|}(N_1\ldots N_s)^{-|\pi|}\\
&=&\frac{1}{\sqrt{M^k}}\sum_{\pi\in D^{(1)}\cap\ldots\cap D^{(s)}}\left(\frac{TM}{N}\right)^{|\pi|}
\end{eqnarray*}

But this gives the formula in the statement, and we are done.
\end{proof}

As a consequence, we have the following result:

\begin{theorem}
In the context of a liberation operation for quantum groups, $G^{(i)}\to G^{(i)+}$, the laws of the variables $\sqrt{M}\chi_T$ are in Bercovici-Pata bijection, in the $N_i\to\infty$ limit.
\end{theorem}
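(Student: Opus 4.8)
The plan is to use the moment formula from Proposition 8.1 and observe that the right-hand side becomes independent of the ``freeness status'' of the quantum groups once we rescale. The key structural point is that in the $N_i\to\infty$ limit, the moments of $\sqrt{M}\chi_T$ depend only on the \emph{intersection} of the categories of partitions $D^{(1)}(k)\cap\ldots\cap D^{(s)}(k)$, together with the scalar $(TM/N)$, but \emph{not} on whether each $D^{(i)}$ is the classical category or its free (noncrossing) analogue beyond the combinatorics of that intersection.

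First I would apply Proposition 8.1 to both sides of the liberation. For the variable $\sqrt{M}\,\chi_T$, the scaling kills the $1/\sqrt{M^k}$ prefactor, yielding
$$\int_X(\sqrt{M}\chi_T)^k\simeq\sum_{\pi\in D^{(1)}(k)\cap\ldots\cap D^{(s)}(k)}\left(\frac{TM}{N}\right)^{|\pi|}.$$
Next I would recall the defining feature of the Bercovici-Pata bijection: it sends a classical infinitely divisible law to a freely infinitely divisible one by matching cumulants, and at the level of moment-cumulant combinatorics this corresponds precisely to replacing sums over all partitions $P(k)$ by sums over noncrossing partitions $NC(k)$, with the \emph{same} cumulant sequence. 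So the heart of the argument is to show that under the liberation $G^{(i)}\to G^{(i)+}$, the intersected category $\bigcap_i D^{(i)}(k)$ passes from its classical form to its noncrossing form, with the weights $(TM/N)^{|\pi|}$ unchanged.

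The key step is therefore the identity, for easy quantum groups, that liberation replaces each $D^{(i)}$ by the corresponding noncrossing category $D^{(i)}\cap NC$, which is standard in the easiness literature \cite{bsp}, \cite{twe}. Taking intersections commutes with intersecting against $NC$, so
$$\bigcap_i(D^{(i)}\cap NC)=\Big(\bigcap_i D^{(i)}\Big)\cap NC,$$
meaning the summation index set for the liberated moments is exactly the noncrossing part of the summation index set for the original moments. Since the summand $(TM/N)^{|\pi|}$ depends only on $|\pi|$, and since the Bercovici-Pata bijection is characterized precisely by this passage from $P$ to $NC$ at fixed block-counting weights, I would invoke the combinatorial description of the bijection (as in \cite{bpa}, and in the form used in \cite{ba1}, \cite{ba2}) to conclude that the limiting law of $\sqrt{M}\chi_T$ for the free version is the free analogue of the limiting law for the classical version.

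The main obstacle I anticipate is \emph{not} the combinatorics of the intersection, but ensuring that the limiting moment sequence genuinely defines a probability law to which the Bercovici-Pata formalism applies, i.e.\ checking that $\sum_{\pi\in(\bigcap_i D^{(i)})\cap NC}t^{|\pi|}$ (with $t=TM/N$) arises as the free cumulant expansion of a compactly supported or otherwise well-behaved measure, so that the bijection is literally applicable rather than merely formal. In the fully free case this should follow from the noncrossing moment-cumulant machinery, but matching it cleanly against the classical side — where one must know the classical limiting law exists and is infinitely divisible with the matching cumulants — is the delicate point. Assuming the easiness framework supplies both categories in the required classical/free pair, the argument then closes by direct comparison of the two moment formulas.
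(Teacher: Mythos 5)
Your proposal is correct and follows essentially the same route as the paper: rescale the moment formula of Proposition 8.1, use the intersection identity $\bigcap_i(D^{(i)}\cap NC)=\left(\bigcap_iD^{(i)}\right)\cap NC$, and conclude via the Bercovici--Pata bijection for truncated characters from \cite{bsp}, \cite{twe}. The only place the paper is cleaner is precisely your flagged ``delicate point'': instead of re-deriving the moment-cumulant combinatorics, it reads the intersection identity at the quantum group level as $<G^{(1)+},\ldots,G^{(s)+}>=<G^{(1)},\ldots,G^{(s)}>^+$, so that the whole situation becomes a single liberation of easy quantum groups and the known truncated-character bijection theorem applies verbatim, with no separate check needed that the limiting moment sequences define laws to which the bijection applies.
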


\begin{proof}
Assume indeed that we have easy quantum groups $G^{(1)},\ldots,G^{(s)}$, with free versions $G^{(1)+},\ldots,G^{(s)+}$. At the level of the categories of partitions, we have:
$$\bigcap_i\left(D^{(i)}\cap NC\right)=\left(\bigcap_iD^{(i)}\right)\cap NC$$

Since the intersection of Hom-spaces is the Hom-space for the generated quantum group, we deduce that at the quantum group level, we have:
$$<G^{(1)+},\ldots,G^{(s)+}>=<G^{(1)},\ldots,G^{(s)}>^+$$

Thus the result follows from Proposition 8.1, and from the Bercovici-Pata bijection result for truncated characters for this latter liberation operation \cite{bsp}, \cite{twe}.
\end{proof}

As a conclusion, Theorem 7.4 provides a quite reasonable definition for the notion of ``easy homogeneous space''. There are of course several potential extensions to be explored, by using for instance the more general notions from \cite{fre}, \cite{swe}. Interesting as well would be to try to understand what an ``easy algebraic manifold'' should be, independently of the quantum group context. Observe that this latter question makes indeed sense, because in the context of the general considerations in section 3 above, $G\subset U_N^+$ appears as a certain uniquely determined quantum subgroup of the affine quantum isometry group of $X\subset S^{N-1}_{\mathbb C,+}$. Thus, an axiomatization of the easy algebraic manifolds is in principle possible, without direct reference to the underlying compact quantum groups.

\end{document}